\newtheorem{theorem}{Theorem}[section]
\theoremstyle{definition}
\newtheorem{definition}[theorem]{Definition}
\newtheorem{remark}[theorem]{Remark}
\newcommand{\upcite}[1]{\textsuperscript{\textsuperscript{\cite{#1}}}}
\def\cX{{\cal X}}
\numberwithin{equation}{section}
\begin{document}
\makeatletter

\title{\Large {\bf A Modification Piecewise Convexification Method for Box-Constrained Non-Convex Optimization Programs}}

\author{Qiao Zhu, Liping Tang\thanks{Corresponding Author}, and Xinmin Yang}


\maketitle
\vspace*{0mm}

\vspace{2mm}

\footnotesize{
\noindent\begin{minipage}{14cm}
{\bf Abstract:}
This paper presents a piecewise convexification method to approximate the whole approximate optimal solution set of non-convex optimization problems with box constraints. In the process of box division, we first classify the sub-boxes and only continue to divide only some sub-boxes in the subsequent division. At the same time, applying the $\alpha$-based Branch-and-Bound ({\rm$\alpha$BB}) method, we construct a series of piecewise convex relax sub-problems, which are collectively called the piecewise convexification problem of the original problem. Then, we define the (approximate) solution set of the piecewise convexification problem based on the classification result of sub-boxes. Subsequently, we derive that these sets can be used to approximate the global solution set with a predefined quality. Finally,  a piecewise convexification algorithm with a new selection rule of sub-box for the division and two new termination tests is  proposed.  Several instances verify that these techniques are beneficial to improve the performance of the algorithm.
\end{minipage}
 \\[5mm]

\noindent{\bf Keywords:} {Non-convex programming, Global optimization, Optimization solution set, $ {\rm \alpha BB}$ method, piecewise convexification, approximation}\\
\noindent{\bf Mathematics Subject Classification:} {90C26, 90C30, 90C90}

\hbox to14cm{\hrulefill}\par


\section{Introduction}

Non-convex optimization problems arise frequently in machine learning  \cite{zhong2012training,jain2017non,wen2018survey}, and in many other applications. Meanwhile, how to effectively solve non-convex optimization problems has gained much attention. So far, the majority of global approximation algorithms are designed, see, \cite{liuzzi2019new,locatelli2021global,schoen2021efficient,yang2018successive,marmin2019globally,sun2001convexification,wu2005convexification,xia2020survey}. In particular,   the ${\rm\alpha BB}$ method has become an increasingly important global optimization method in the design of efficient and computationally tractable numerical algorithms for non-convex optimization problems, see, \cite{androulakis1995alphabb,Milan2014On,hladik2016extension,kazazakis2018arbitrarily}.
It is worth noticing that these algorithms aim in general at determining a single globally optimal solution, and the majority of application problems may be existed with many or even infinite of many  globally optimal solutions. To the best of our knowledge, the number of algorithms for determining representations of the set of approximate globally optimal solutions seems to be still quite limited. In \cite{eichfelder2016modification}, Eichfelder and Gerlach generalize the classical ${\rm\alpha BB}$ method to find a representation of the whole optimal solution set with predefined quality for a non-convex optimization problem. As they pointed out, however, some additional variables and the additional while loop are necessary.


Motivated by \cite{eichfelder2016modification}, we develop a piecewise convexification method to approximate the whole global optimal solution set of non-convex optimization problems.
Incorporating the ${\rm\alpha BB}$ method and the interval subdivision technique, we firstly piecewise relax the original problem and obtain a series of convex relaxation sub-problems, collectively called the piecewise convexification problem  of the original problem. Then, we construct the (approximate) solution sets of the piecewise convexification problem by comparing all the (approximate) optimal solutions of the convex relaxation sub-problems on each sub-box, and we show that these constructed sets can be used to approximate the global optimal solution set with a predefined quality. Finally, a new piecewise convexification algorithm is proposed, which incorporates a new selection rule of sub-box for the division and two new termination rules. Furthermore, several instances verify that these selection rule and termination tests are conducive to improving the effectiveness and the speed of the algorithm.
%

This paper is organized as follows. Section 2 summarizes some basic definitions of the optimization problem. It also introduces the $\alpha{\rm BB}$ method and the interval division. In Section 3, we firstly propose a piecewise convexification method for the non-convex optimization problem and then analyze the solution set of this piecewise convexification optimization problem. More importantly, some relationships  between the (approximate) optimal solution set of the convexification problem and the original optimization problem are also stated in detail. A new algorithm that generates the subset of approximation global solutions is presented in Section 4. Finally, we report and discuss several numerical experiments in section 5.

\section{Preliminaries}
Let $\mathbb{I}$ denoted the set of all real nonempty closed boxes and $\mathbb{I}^{n}$ denote the set of all n-dimensional boxes.
For a given  box ${X}\in \mathbb{I}^{n}$, we set ${X}=[a,b]:=\prod\limits_{i=1}^{n}[a_{i},b_{i}]$ where ${a}=(a_{1},\cdots,a_{n})$ and ${b}=(b_{1},\cdots,b_{n})$. Thus,  $x\in {X}$ denotes $x_{i}\in[a_{i},b_{i}]$ for any $i\in\{1,\cdots,n\}$.
In this paper, let ${X}=[{a},b]$, and then we consider the following non-convex optimization problem(NCOP):
\begin{align*}
{\text{(NCOP)}}~~\min\limits_{x\in {X}} f(x),
\end{align*}
where $f:\mathbb{R}^{n}\rightarrow\mathbb{R}$ is a non-convex  twice continuously differentiable function. We start with an overview of the (approximation) optimal solution of (NCOP).
\begin{definition}(see \cite{eichfelder2016modification})
Let $\varepsilon>0$, $X$ be a nonempty subset of $\mathbb{R}^{n}$, and $f:X\rightarrow\mathbb{R}$ such that $\arg\min\limits_{x\in X}f(x)\neq \varnothing$.

(a) A point $\tilde{x}\in X$ is an optimal solution of $f$ w.r.t. $X$, if
\begin{align*}
f(\tilde{x})\leq f(y) ~~\forall y\in X.
\end{align*}
$\cX_{op}:=\{x\in X: f(x)\leq f(y), ~\forall y\in X\}$ is the optimal solution set.

(b) A point $\bar{x}\in X$ is an $\varepsilon$-minimal point of $f$ w.r.t. $X$, if
\begin{align*}
f(\bar{x})\leq f(y)+\varepsilon ~~\forall y\in X.
\end{align*}
$\cX_{op}^{\varepsilon}:=\{x\in X: f(x)\leq f(y)+\varepsilon, \forall y\in X\}$ is the approximate solution set.
\end{definition}
Before proceeding further, we hereby give brief descriptions of the ${\rm \alpha BB}$ method and the interval subdivision, because they will play an important role in designing the piecewise convexification method.

{\bf \subsection{The ${\rm{\alpha BB}}$ Method}}
For solving non-convex problems in global optimization, the $\alpha{\rm BB}$ method constructs a convex relaxation estimation function of $f$ w.r.t. $X$, see   \cite{eichfelder2016modification,adjiman1998globalI,adjiman1998globalII,androulakis1995alphabb}. More precisely, Let $f: X\rightarrow \mathbb{R}$ be a real-valued twice continuously differentiable function and  $X=[a,b]\in\mathbb{I}^{n}$. A convex lower relaxation function $F_{X}^{\alpha}: X\rightarrow \mathbb{R}$ of $f$ by the idea of the $\alpha$BB method is defined in \cite{adjiman1998globalI} as follows:
\begin{align}\label{F-alpha:2021051501}
F_{X}^{\alpha}(x)=f(x)+\sum\limits_{i=1}^{n}\alpha_{i}(a_{i}-x_{i})(b_{i}-x_{i}),
\end{align}
where parameter $\ \alpha:=(\alpha_{1},\cdots,\alpha_{n})$ guarantees the convexity of $F_{X}^{\alpha}$ on $X$.

For estimating the value of $\alpha$ several methods already  exist, see,  \cite{adjiman1998globalI,nerantzis2019tighter, skjal2014new}. In this article, we directly adopt the following method from \cite{adjiman1998globalI} to roughly calculate the value $\alpha_{i}(i=1,\cdots,n)$,
as defined by
\begin{align}\label{alpha:2021051201}
\alpha_{i}:=\max\left\{0,-\frac{1}{2}\left(\min\limits_{x\in X}\nabla^2f(x)_{ii}-\sum(X,i,d)\right)\right\},~i = 1,\cdots,n
\end{align}
where $d:=b-a\in\mathbb{R}^{n}$, Hessian matrix $\nabla^2f(x)=\Big(\nabla^2f(x)_{ij}\Big)$ and
\begin{align}\label{sum:2021051701}
\sum(X,i,d):= \sum\limits_{i\neq j}\max\left\{\left|\min\limits_{x\in X}\nabla^2f(x)_{ij}\right|,\left|\max\limits_{x\in X}\nabla^2f(x)_{ij}\right|\right\}\frac{d_{j}}{d_{i}}.
\end{align}
Noting that $\alpha_{i}$ is a finite value.
Let $\mathbf{1}=(1,\cdots,1)$,  a lower bound of the minimum eigenvalue of $\nabla^2f(x)$ w.r.t. $X$ is given in \cite{adjiman1998globalI}, i.e., $$\lambda_{\min}^{X}\geq \min\limits_{i}\Big(\min\limits_{x\in X}\nabla^2f(x)_{ii}-\sum(X,i,\mathbf{1})\Big).$$
It is well known that if the lower bound
\begin{align}\label{lambda:2021051702}
\tilde{\lambda}_{X}:=\min\limits_{i}\Big(\min\limits_{x\in X}\nabla^2f(x)_{ii}-\sum(X,i,\mathbf{1})\Big)\geq 0,
\end{align}
then $f$ is obviously convex on $X$, not vice versa. In addition, for any boxes $X_{1}$ and $X_{2}$ with $X_1\subseteq X_{2}$, it is clear that $\tilde{\lambda}_{X_2}\leq\tilde{\lambda}_{X_1}$.

Moreover, in \cite{nerantzis2019tighter}, the maximum separation distance between $f$ and $F_{X}^{\alpha}$ over $X$ is typically of the form
\begin{align}\label{eq:2021052601}
D(X)=\max\limits_{x\in X}\|f(x)-F_{X}^{\alpha}(x)\|=\sum\limits_{i=1}^{n}\alpha_{i}\Big(\frac{b_{i}-a_{i}}{2}\Big)^2,
\end{align}
which shows that $D(X)$ is determined by the interval $[a,b]$ and $\alpha$.

{\bf\subsection{Interval Division}}
\noindent
As shown in Eq.(\ref{eq:2021052601}), a smaller interval helps to generate a tighter under-estimator of the original function. Thus, we try to divide the whole box into some sub-boxes in order to better approximate the original function.

In this paper, let $\mathbf{Y}^t:= \{Y^{1}, Y^{2}, \cdots, Y^{M_t}\}$ be a subdivision of $X$ with respect to the number of division $t\in\mathbb{N}$, which satisfies that
\begin{align*}
X = \bigcup\limits_{{k_t}=1}^{M_{t}}Y^{k_t} \text{~and~} \mu^{n}(Y^{k_t}\cap Y^{j_t})=0 ~\forall k_t, j_t\in\{1,\cdots,M_{t}\},
\end{align*}
where $\mu^{n}$ denotes the Lebesgue measure on $\mathbb{R}^{n}$ and the subinterval $Y^{k_t}$ abbreviate  as $Y^{k_t}=[a^{k_t},b^{k_t}]=\prod\limits_{i=1}^{n}[a_{i}^{k_t}, b_{i}^{k_t}]$. It is worth noting that the construction of the subdivision $\mathbf{Y}^{(t+1)}$ of $X$ w.r.t $(t+1)$ is based on $\mathbf{Y}^{t}$, which is to select one or more sub-intervals from $\mathbf{Y}^{t}$ to divide. In what follows, we introduce the interval division method of any subinterval $Y^{k_t}$.

For a given box $Y^{k_t}=[a^{k_t}, b^{k_t}]\in\mathbb{I}^{n}$, the branching index $l$ is defined by
\begin{align*}
l:=\min\{j\in\{1,\cdots,n\}:j\in\arg\max\limits_{j\in\{1,\cdots,n\}}(b^{k_t}_{j}-a^{k_t}_{j})\},
\end{align*} and $Y^{k_t}$ splits into two subsets $Y^{k_t,1}$ and $Y^{k_t,2}$ based on direction $l$   by
\begin{align*}
&Y^{k_t, 1}:=\prod\limits_{i=1,i\neq l}^{n}[a^{k_t}_{i}, b^{k_t}_{i}]\times \left[a^{k_t}_{l}, \frac{a^{k_t}_{l}+b^{k_t}_{l}}{2}\right],\\
&Y^{k_t, 2}:=\prod\limits_{i=1,i\neq l}^{n}[a^{k_t}_{i}, b^{k_t}_{i}]\times \left[\frac{a^{k_t}_{l}+b^{k_t}_{l}}{2}, b^{k_t}_{l}\right].
\end{align*}
Clearly, $Y^{k_t}=Y^{k_t,1}\cup Y^{k_t,2}$ and $Y^{k_t, 1}, Y^{k_t, 2}\in \mathbf{Y}^{(t+1)}$. For simplicity, we may define the splitting operator $\text{Sp}(Y^{k_t})=\{Y^{k_t,1}, Y^{k_t,2}\}$. In this paper, we define the length of the subdivision $\mathbf{Y}^{t}$ of $X$ by
\begin{align*}
|T(\mathbf{Y^{t}})|:=\max\limits_{k_{t}\in\{1,\cdots,M_{t}\}}\{\|a^{k_t}-b^{k_t}\|_{2}^{2}\}=\max\limits_{k_t\in\{1,\cdots,M_{t}\}}\left\{\sum\limits_{i=1}^{n}(b_{i}^{k_t}-a_{i}^{k_t})^2\right\}.
\end{align*}
\begin{remark}\label{remark:20210715001}
Let $x^{*}\in\cX_{op}$.  For any given subdivision $\mathbf{Y}^{t}$ of $X$, there exists $Y^{k_t}\in\mathbf{Y}^{t}$ such that $x^{*}\in Y^{k_t}$ and $x^{*}$ is an optimal solution of $f$ w.r.t. $Y^{k_t}$.
\end{remark}

\section{Piecewise Convexification Method  for (NCOP)}
In this section, we firstly introduce the piecewise convexification problem for the non-convex optimization problem (PC-NCOP). The solution sets of the piecewise  convexification optimization problem are constructed and discussed in detail. Finally, we analyze some relationships between the solution sets of the piecewise convexification optimization problem and the (approximate) globally optimal solution set of the original non-convex optimization  problem.

{\bf\subsection{Piecewise Convexification Problem}}
In order to approximate the global solution set of a non-convex optimization problem,  we use the interval subdivision method to divide $X$ into several sub-boxes and use the $\alpha$BB method to relax this problem on each sub-box of $X$ rather than $X$. Thus it
is referred to as the piecewise convexification method. In the following, we discuss this method in detail.

Let $\mathbf{Y}^{t}:= \{Y^{1}, Y^{2},\cdots, Y^{M_t}\}$ be a subdivision of $X$.
Then we consider the same convex relaxation subproblem on $Y^{k_t}=[a^{k_t},b^{k_t}]\in \mathbf{Y}^{t}$ as \cite{adjiman1998globalI}, that is,
\begin{align}\label{alphaBB}
\min\limits_{x\in Y^{k_t}} F_{ k_t}^{\alpha^{k_t}}(x):=f(x)+\sum\limits_{i=1}^{n}{\alpha_{i}^{k_t}}(a_{i}^{k_t}-x_{i})(b_{i}^{{k_t}}-x_{i}),
\end{align}
where 
%
if $\tilde{\lambda}_{Y^{k_t}}\geq 0$ estimated by (\ref{lambda:2021051702}), then $\alpha_{i}^{k_t}=0$. Otherwise, $\alpha_{i}^{k_t}$ is computed by (\ref{alpha:2021051201}) for any $\{1,\cdots,n\}$.  Let $\alpha^{k_t}:=(\alpha_{1}^{k_t},\cdots,\alpha_{n}^{k_t})$. Apparently, $F_{k_t}^{\alpha^{k_t}}$ is a convex lower estimation function of $f(x)$ on $Y^{k_t}$ and $F_{k_t}^{\alpha^{k_t}}(x)=f(x)$ for any $x\in Y^{k_t}$ when $\tilde{\lambda}_{Y^{k_t}}\geq 0$.
Let $\cX_{ap}^{k_t}$ and $\cX_{ap}^{k_t,\varepsilon}$ denote the set of all optimal solutions and the set of all approximate solutions of (\ref{alphaBB}), respectively, that is,
\begin{align}
&\cX_{ap}^{k_t}:=\{x\in Y^{k_t}: F_{k_t}^{\alpha^{k_t}}(x)\leq F_{k_t}^{\alpha^{k_t}}(y) \text{~for~any~} y\in Y^{k_t}\},\label{X_ap}\\
&\cX_{ap}^{k_t,\varepsilon}:=\{x\in Y^{k_t}: F_{k_t}^{\alpha^{k_t}}(x)\leq F_{k_t}^{\alpha^{k_t}}(y)+\varepsilon \text{~for~any~} y\in Y^{k_t}\},\label{cXk:20210725}
\end{align}
where $\varepsilon>0$. Obviously, $\cX_{ap}^{k_t}$ and $\cX_{ap}^{k_t,\varepsilon}$ are not empty sets.

For any $k_t\in\{1,\cdots,M_t\}$ and $Y^{k_t}\in\mathbf{Y}^{t}$, $F_{k_t}^{\alpha^{k_t}}$ is a convex relaxation sub-problem of (NCOP) w.r.t. $Y^{k_t}$. Obviously, all convex sub-problems compose the piecewise convexification optimization problem of (NCOP) w.r.t. $X$.


{\bf \subsection{The Solution Set of the Piecewise Convexification Problem}}
In this subsection, we construct the solution set of the piecewise convexification problem w.r.t. $X$ for the subdivision $\mathbf{Y}^{t}$. The construction of this solution set is crucial because it relates to the approximation of the global optimal solution set and  directly affects the performance of the algorithm.

Let $M_t$ represent the index set of all sub-boxes from the subdivision $\mathbf{Y}^t$. Note that $X=\bigcup\limits_{k_t\in M_{t}}Y^{k_t}$. As we all know, if $f$ is convex on a current box, then it is also convex on any sub-box of this box. Thus, we will verify that whether $f$ is already convex on the current box before dividing one box in this paper.
Obviously, we can firstly define two auxiliary indicator sets to judge the convexity of $f$ on its corresponding box, i.e.,
\begin{align}\label{M1(t):2021071401}
M_{1}(t):=\left\{k_t\in M_{t}: \tilde{\lambda}_{Y^{k_t}}\geq 0\right\}\text{~and~}
M_{2}(t):=\left\{k_t\in M_{t}: \tilde{\lambda}_{Y^{k_t}}<0\right\},
\end{align}
where $\tilde{\lambda}_{Y^{k_t}}$ is defined by (\ref{lambda:2021051702}). Moreover, $M_{t} = M_{1}(t)\cup M_{2}(t)$. Clearly, $f$ is convex on any subset of $Y^{k_t}$ for any $k_t\in M_{1}(t)$. However, for any $k_t\in M_{2}(t)$ one cannot assert that $f$ must be non-convex on $Y^{k_t}$ because we only obtain  $\tilde{\lambda}_{Y^{k_t}}<0$ rather than $\lambda_{min}^{Y^{k_t}}<0$, that is, the convexity of $f$ is uncertain on $Y^{k_t}$ for any $k_t\in M_2(t)$.  Thus, we need more concerned about the indexes in $M_{2}(t)$ rather than in  $M_{1}(t)$ for any $t$. Our idea of choosing the box to divide is that no subdivision is applied to the box when $f$ is convex on it,  and the box on which $f$ is non-convex is only selected to be divided.

Then, some notations about the union of solution sets corresponding to the above indexes sets, are represented,  which help us to clearly define the solution set of the piecewise convexification problem.
\begin{align}\label{cXap(C,t):2021071402}
\cX_{ap}^{C}(t):=\bigcup\limits_{k_t\in M_{1}(t)}\cX_{ap}^{k_t} \text{~~and~~} \cX_{ap}^{NC}(t):=\bigcup\limits_{k_t\in M_{2}(t)}\cX_{ap}^{k_t}
\end{align}
 where $\cX_{ap}^{k_t}$ is an optimal solution set of the convex relaxation optimization problem (\ref{alphaBB}) w.r.t. $Y^{k_t}$. Final, in this paper we directly define the set of the piecewise convexification problem $\cX_{ap}(t)$ of the following form
\begin{align}\label{eq:2021053101}
{\cX}_{ap}(t)=\left\{x\in\cX_{ap}^{C\cup NC}(t): f(x)\leq f(y) \text{~for~any~} y\in\cX_{ap}^{C\cup NC}(t)\right\}
\end{align}
where $\cX_{ap}^{C\cup NC}(t):= \cX_{ap}^{C}(t)\cup\cX_{ap}^{NC}(t)$.

In what follows, we analyze the solution set $\cX_{ap}(t+1)$ we defined from the perspective of the division process, which helps us better understand the advantages of this definition.
As mentioned above, no subdivision is applied to this box on which $f$ is convex, that is, from the subdivision $t$ to $(t+1)$ we only divide the boxes whose indicators belong to $M_2(t)$, instead of dividing all the boxes corresponding to $M_{t}$. Then, this division process yields to
two new auxiliary indexes sets based on $M_{2}(t)$, as defined by
\begin{align}
&M_{t}^{t+1,C}:=\left\{j_{t}:\tilde{\lambda}_{Y^{j_t}}\geq 0 \text{~where~}Y^{j_{t}}\in\text{Sp}(Y^{k_t})\text{~and~}k_t\in M_{2}(t)\right\},\label{ineq1:2021041607}\\
&M_{t}^{t+1,NC}:=\left\{j_{t}: \tilde{\lambda}_{Y^{j_t}}<0 \text{~where~} Y^{j_{t}}\in\text{Sp}(Y^{k_t})\text{~and~} k_t\in M_{2}(t)\right\}, \label{ineq1:20210416071}
\end{align}
which indicate that we classify the boxes generated in the division process, and put the index of the new box that makes $f$ convex into $M_{t}^{t+1,C}$, otherwise, put it into $M_{t}^{t+1,NC}$. Obviously, these definitions imply that
\begin{align}
&M_1(t+1)=M_1(t)\cup M_{t}^{t+1, C} \text{~and~} M_2(t+1)= M_{t}^{t+1, NC}\label{M1(t+1):2021071401}.
\end{align}
The union of solution sets about (\ref{ineq1:2021041607}) and (\ref{ineq1:20210416071}) are similarly represented by
\begin{align*}
\cX_{ap}^{newC}(t, t+1):=\bigcup\limits_{k_t\in M_{t}^{t+1,C}}\cX_{ap}^{k_t} \text{~~and~~}
\cX_{ap}^{newNC}(t, t+1):=\bigcup\limits_{k_t\in M_{t}^{t+1,NC}}\cX_{ap}^{k_t}.
\end{align*}
Combining this with (\ref{M1(t+1):2021071401}), one can conduct that
\begin{align*}
\cX_{ap}^{C}(t+1)=\cX_{ap}^{C}(t)\cup\cX_{ap}^{newC}(t,t+1) \text{~~and~~} \cX_{ap}^{NC}(t+1)=\cX_{ap}^{newNC}(t, t+1).
\end{align*}
Therefore, $\cX_{ap}(t+1)$ can be equivalently expressed as the following form:
\begin{align*}
\cX_{ap}(t+1):=\left\{x\in\cX_{ap}(t,t+1): f(x)\leq f(y)\text{~for~any~}
 y\in\cX_{ap}(t,t+1)\right\},
\end{align*}
where $\cX_{ap}(t,t+1):=\cX_{ap}^{C}(t)\cup \cX_{ap}^{newC}(t, t+1)\cup\cX_{ap}^{newNC}(t, t+1)$.

This equivalent form directly demonstrates the rationality and advantage of this definition way of $\cX_{ap}(t+1)$. These are summarized in following remark.
\begin{remark}
(i) When $f$ is convex on $X$, it is easy to check that $\cX_{ap}(t)=\cX_{op}$ for any $t$. This implies that the definition of solution set of the piecewise convexification optimization problem is reasonable.

(ii) There is a significant relationship between $\cX_{ap}(t)$ and $\cX_{ap}(t+1)$ since set $\cX_{ap}(t+1)$ uses part of the information of the subdivision $t$, that is, $\cX_{ap}^{C}(t)$. According to the fact that the subdivision $(t+1)$ is always based on the result of subdivision $t$, it follows that this relationship is reasonable.

(iii) From the subdivision $t$ to $t+1$, we
do not consider the boxes that make $f$ convex in the subdivision $t$. Moreover, we directly use $\cX_{ap}^{C}(t)$ from the result of subdivision $t$  to construct the set $\cX_{ap}(t+1)$, instead of solving these sub-problems repeatedly. These techniques may reduce the number of sub-problems to be solved in the piecewise convexification method.
\end{remark}
Next,  we will discuss the relationship between $\cX_{ap}(t)$ and $\cX_{op}$ when $f$ has the piecewise convex property on $X$.
\begin{theorem}
If there exists the subdivision $\mathbf{Y}^{t_{0}}$  of $X$ such that $M_{2}(t_0)=\emptyset$, then $\cX_{op}=\cX_{ap}(t_0)$.
\end{theorem}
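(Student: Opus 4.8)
The plan is to prove the two inclusions $\cX_{op}\subseteq\cX_{ap}(t_0)$ and $\cX_{ap}(t_0)\subseteq\cX_{op}$ separately, after first exploiting the hypothesis $M_2(t_0)=\emptyset$ to simplify the objects involved. Since $M_{t_0}=M_1(t_0)\cup M_2(t_0)$ and $M_2(t_0)=\emptyset$, every index $k_{t_0}\in M_{t_0}$ satisfies $\tilde\lambda_{Y^{k_{t_0}}}\geq 0$, so by the discussion following (\ref{alphaBB}) we have $\alpha^{k_{t_0}}=0$ and hence $F_{k_{t_0}}^{\alpha^{k_{t_0}}}(x)=f(x)$ for all $x\in Y^{k_{t_0}}$. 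Consequently each $\cX_{ap}^{k_{t_0}}$ is exactly the set of minimizers of $f$ over $Y^{k_{t_0}}$, and since $\cX_{ap}^{NC}(t_0)=\bigcup_{k_{t_0}\in M_2(t_0)}\cX_{ap}^{k_{t_0}}=\emptyset$, the set $\cX_{ap}^{C\cup NC}(t_0)$ reduces to $\bigcup_{k_{t_0}\in M_{t_0}}\cX_{ap}^{k_{t_0}}$, i.e.\ the union over all sub-boxes of the boxwise $f$-minimizers.

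For the inclusion $\cX_{op}\subseteq\cX_{ap}(t_0)$, I would take any $x^{*}\in\cX_{op}$. By Remark \ref{remark:20210715001} there is a sub-box $Y^{k_{t_0}}\in\mathbf{Y}^{t_0}$ with $x^{*}\in Y^{k_{t_0}}$ on which $x^{*}$ is a minimizer of $f$; by the previous paragraph this means $x^{*}\in\cX_{ap}^{k_{t_0}}\subseteq\cX_{ap}^{C\cup NC}(t_0)$. Because $\cX_{ap}^{C\cup NC}(t_0)\subseteq X$ and $x^{*}$ minimizes $f$ on all of $X$, we get $f(x^{*})\leq f(y)$ for every $y\in\cX_{ap}^{C\cup NC}(t_0)$, which is precisely the defining condition (\ref{eq:2021053101}) for membership in $\cX_{ap}(t_0)$; hence $x^{*}\in\cX_{ap}(t_0)$.

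For the reverse inclusion $\cX_{ap}(t_0)\subseteq\cX_{op}$, I would take $\hx\in\cX_{ap}(t_0)$ and an arbitrary $y\in X$. Using $X=\bigcup_{k_{t_0}\in M_{t_0}}Y^{k_{t_0}}$, choose $k_{t_0}$ with $y\in Y^{k_{t_0}}$ and pick any $x^{k_{t_0}}\in\cX_{ap}^{k_{t_0}}$, which is nonempty. Since $x^{k_{t_0}}$ minimizes $f$ over $Y^{k_{t_0}}\ni y$ we have $f(x^{k_{t_0}})\leq f(y)$, while $x^{k_{t_0}}\in\cX_{ap}^{C\cup NC}(t_0)$ together with $\hx\in\cX_{ap}(t_0)$ gives $f(\hx)\leq f(x^{k_{t_0}})$. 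Chaining these yields $f(\hx)\leq f(y)$, and as $y\in X$ was arbitrary, $\hx\in\cX_{op}$. Combining the two inclusions proves $\cX_{op}=\cX_{ap}(t_0)$.

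The only genuinely load-bearing step, and the place I would be most careful, is the reduction in the first paragraph: the identity $F_{k_{t_0}}^{\alpha^{k_{t_0}}}=f$ on each box, valid precisely because $\tilde\lambda_{Y^{k_{t_0}}}\geq 0$ forces $\alpha^{k_{t_0}}=0$, is what turns the abstract relaxed minimizer sets $\cX_{ap}^{k_{t_0}}$ into true boxwise $f$-minimizer sets. Once that is in hand, both inclusions are straightforward comparisons of $f$-values across the cover of $X$: the forward direction relies on Remark \ref{remark:20210715001}, and the backward direction on the covering property of the subdivision together with the nonemptiness of each $\cX_{ap}^{k_{t_0}}$.
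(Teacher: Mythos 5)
Your proof is correct and follows essentially the same route as the paper's: both reduce to the observation that $\tilde\lambda_{Y^{k_{t_0}}}\geq 0$ for every index forces $F_{k_{t_0}}^{\alpha^{k_{t_0}}}=f$ on each sub-box, and then establish the two inclusions using the covering property of the subdivision and the non-emptiness of each $\cX_{ap}^{k_{t_0}}$. The only difference is stylistic: you argue both inclusions directly, and your forward inclusion (via Remark \ref{remark:20210715001}) is actually spelled out more carefully than the paper's terse one-line contradiction, while the paper's backward inclusion is the contrapositive of exactly your chaining argument.
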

\begin{proof}
$M_{2}(t_0)=\emptyset$ implies that $\cX_{ap}^{NC}(t_0)=\emptyset$, $M_{t_0}=M_{1}(t_0)$ and
\begin{align}\label{2021071502}
\cX_{ap}(t_0)=\{x\in\cX_{ap}^{C}(t_0): f(x)\leq f(y)\text{~for~any~}y\in\cX_{ap}^{C}(t_0)\}.
\end{align}
According to $\mathbf{Y}^{t_{0}}$ is a subdivision of $X$, then $X=\bigcup\limits_{k_t\in M_{t_0}}Y^{k_t}$.
In what follows, we will proof $\cX_{ap}(t_0)=\cX_{op}$. The proof is by contradiction.

If $\cX_{op}\nsubseteq \cX_{ap}(t_0)$, then there exists $\hat{x}\in\cX_{op}$ such that $\hat{x}\notin\cX_{ap}(t_0)$.
Based on the definition of $\cX_{ap}(t_0)$ as shown in  (\ref{2021071502}),  it easy to verify that $\hat{x}\notin\cX_{op}$.


Next, we assume that $\cX_{ap}(t_0)\nsubseteq\cX_{op}$, that is, there exists $\hat{x}\in\cX_{ap}(t_0)$ with $\hat{x}\notin\cX_{op}$. Thus,  one can find $\hat{y}\in X$ satisfied $f(\hat{y})<f(\hat{x})$. Since $\mathbf{Y}^{t_{0}}$ is a subdivision, then let $\hat{y}\in Y^{j_{k_0}}$  where $k_{t_0}\in M_{t_0}=M_{1}(t_0)$.  Based on the non-emptiness of $\cX_{ap}^{k_{t_0}}$, there must exist $\hat{z}$ such that $\hat{z}\in\cX_{ap}^{k_{t_0}}$. It follows that $f(\hat{z})\leq f(\hat{y})$, which yields to $f(\hat{z})<f(\hat{x})$ with $\hat{z}\in\cX_{ap}^{k_{t_0}}\subseteq\cX_{ap}^{C}(t_{0})$.  This apparently contradicts the fact that $\hat{x}\in\cX_{ap}(t_{0})$. Thus $\cX_{ap}(t_{0})\subseteq\cX_{op}$.

Obviously, by the above analysis one can conduct $\cX_{ap}(t_{0})=\cX_{op}$.
\end{proof}
This theorem shows that, for the non-convex problem with the piecewise convex properties, i.e., $f$ is non-convex on $X$ and $f$ is convex on each sub-box of $X$ for some subdivision $t_0$, the proposed piecewise convexification method can explore all  global optimal solutions of this non-convex optimization problem.

{\bf \subsection{Approximation of the Global Optimal Solution Set}}
In this subsection, we will show that set $\cX_{ap}(t)$ is actually a lower bound set of $\cX_{op}^{\varepsilon_{t}}$, and in order to obtain the upper bound set of $\cX_{op}^{\varepsilon_{t}}$, a new approximation solution set of the piecewise  convexification optimization problem is presented.
\begin{theorem}\label{theorem:20210822}
For any $\varepsilon>0$, there exists $t\in \mathbb{N}$ and the subdivision $\mathbf{Y}^t$ of $X$ satisfied $$\max\limits_{k_t\in M_2(t)}\sum\limits_{i=1}^{n}{\alpha_{i}^{k_t}}\Big(\frac{a_{i}^{k_t}-b_{i}^{k_t}}{2}\Big)^2\leq\varepsilon$$ such that $\cX_{ap}(t)\subseteq \cX_{op}^{\varepsilon}$.
\end{theorem}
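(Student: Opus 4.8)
The plan is to split the statement into two parts: first exhibiting a subdivision that meets the stated smallness condition, and then proving the inclusion $\cX_{ap}(t)\subseteq\cX_{op}^{\varepsilon}$ for any subdivision satisfying it. The inclusion is the conceptual core, so I would prove it first. Take an arbitrary $x\in\cX_{ap}(t)$ and an arbitrary $y\in X$; the goal is $f(x)\le f(y)+\varepsilon$. Since $\mathbf{Y}^t$ covers $X$, fix $k_t\in M_t$ with $y\in Y^{k_t}$, and pick any optimizer $z\in\cX_{ap}^{k_t}$ of the convex relaxation on that box (nonempty as noted after (\ref{cXk:20210725})). Because $M_t=M_1(t)\cup M_2(t)$, we have $z\in\cX_{ap}^{C\cup NC}(t)$, so the defining property of $\cX_{ap}(t)$ gives $f(x)\le f(z)$.

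Then I would chain four elementary inequalities on the box $Y^{k_t}$: (i) $f(x)\le f(z)$ from membership in $\cX_{ap}(t)$; (ii) $f(z)\le F_{k_t}^{\alpha^{k_t}}(z)+D(Y^{k_t})$ from the separation-distance identity (\ref{eq:2021052601}), since $f(z)-F_{k_t}^{\alpha^{k_t}}(z)\le\max_{w\in Y^{k_t}}(f(w)-F_{k_t}^{\alpha^{k_t}}(w))=D(Y^{k_t})$; (iii) $F_{k_t}^{\alpha^{k_t}}(z)\le F_{k_t}^{\alpha^{k_t}}(y)$ because $z$ minimizes the relaxation on $Y^{k_t}$; and (iv) $F_{k_t}^{\alpha^{k_t}}(y)\le f(y)$ because $F_{k_t}^{\alpha^{k_t}}$ is a lower estimator. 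Combining these yields $f(x)\le f(y)+D(Y^{k_t})$. Finally I note that $D(Y^{k_t})\le\varepsilon$: if $k_t\in M_1(t)$ then $\alpha^{k_t}=0$ and $D(Y^{k_t})=0$, while if $k_t\in M_2(t)$ the hypothesis bounds $D(Y^{k_t})=\sum_i\alpha_i^{k_t}((b_i^{k_t}-a_i^{k_t})/2)^2$ by $\varepsilon$. Since $y\in X$ was arbitrary, $x\in\cX_{op}^{\varepsilon}$.

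For the existence part I would show that continued subdivision forces $\max_{k_t\in M_2(t)}D(Y^{k_t})\to0$. The key estimate is a uniform bound on the product $\alpha_i^{k_t}(d_i^{k_t})^2$ where $d^{k_t}=b^{k_t}-a^{k_t}$. Since $f\in C^2$ and $X$ is compact, all entries of $\nabla^2 f$ are bounded in absolute value by some $H$ on $X$, hence on every sub-box. Plugging the definitions (\ref{alpha:2021051201})--(\ref{sum:2021051701}) into $D(Y^{k_t})$, the troublesome ratio $d_j^{k_t}/d_i^{k_t}$ inside $\alpha_i^{k_t}$ is multiplied by $(d_i^{k_t})^2$, producing terms of the form $d_i^{k_t}d_j^{k_t}$ in which the ratio cancels; this gives $D(Y^{k_t})\le\frac{H}{8}(\sum_i d_i^{k_t})^2\le\frac{nH}{8}\,|T(\mathbf{Y}^t)|$. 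Because the branching rule always halves the longest edge, $|T(\mathbf{Y}^t)|\to0$ as $t\to\infty$, so a $t$ with $\frac{nH}{8}|T(\mathbf{Y}^t)|\le\varepsilon$ exists, and for that subdivision every box---in particular every $M_2(t)$ box---meets the condition.

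I expect the main obstacle to be this existence part rather than the inclusion. The delicate point is that $\alpha_i^{k_t}$ itself need not be bounded as boxes shrink, because the ratio $d_j^{k_t}/d_i^{k_t}$ can be large for anisotropic boxes; the argument only works because what actually appears in $D(Y^{k_t})$ is the product $\alpha_i^{k_t}(d_i^{k_t})^2$, where the ratio cancels. I would also need to confirm that restricting subdivision to $M_2$-boxes does not obstruct $|T(\mathbf{Y}^t)|\to0$: boxes that leave $M_2$ (entering $M_1$) already have $D=0$, and the boxes that remain in $M_2$ are repeatedly bisected, so their diameters vanish. The inclusion argument, by contrast, is a routine four-term chain once $z$ and the box containing $y$ are fixed.
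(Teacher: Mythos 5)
Your proof is correct, and it is worth comparing the two halves separately. For the inclusion $\cX_{ap}(t)\subseteq\cX_{op}^{\varepsilon}$, your four-inequality chain is essentially the paper's argument run forwards instead of by contradiction: the paper assumes $\hat{x}\in\cX_{ap}(t_0)\setminus\cX_{op}^{\varepsilon}$, locates the box $Y^{k_{t_0}}$ containing the witness $\hat{y}$, splits into cases ($\hat{y}$ an optimizer of the relaxation or not; $k_{t_0}\in M_1(t_0)$ or $M_2(t_0)$), and in each case produces a point $\hat{z}\in\cX_{ap}^{C\cup NC}(t_0)$ with $f(\hat{z})<f(\hat{x})$, using exactly your key estimate that the convexification gap $\sum_i\alpha_i^{k_t}\bigl(\frac{b_i^{k_t}-a_i^{k_t}}{2}\bigr)^2$ is at most $\varepsilon$ on $M_2$-boxes and zero on $M_1$-boxes. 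Your direct formulation is cleaner in that the case analysis collapses into the single observation $D(Y^{k_t})\le\varepsilon$, but the mathematical content is the same. Where you genuinely go beyond the paper is the existence part: the paper disposes of it in two lines, citing Lemma 5 of the reference for $|T(\mathbf{Y}^{t})|\to 0$ and then asserting that the condition follows ``since $\alpha_i$ is a finite value,'' which glosses over the fact that $\alpha_i^{k_t}$ is box-dependent and, because of the ratios $d_j^{k_t}/d_i^{k_t}$ in (\ref{sum:2021051701}), need not stay bounded as boxes shrink. Your cancellation bound $\alpha_i^{k_t}(d_i^{k_t})^2\le\frac{H}{4}\,d_i^{k_t}\sum_j d_j^{k_t}$, giving $D(Y^{k_t})\le\frac{nH}{8}|T(\mathbf{Y}^{t})|$, is precisely the estimate needed to make that step rigorous, and your remark about subdivision being restricted to $M_2$-boxes (harmless, since $M_1$-boxes contribute zero to the maximum in the hypothesis) addresses a second point the paper leaves implicit.
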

\begin{proof}
First, by the Lemma 5 in \cite{eichfelder2016modification}, there exists the subdivision $\mathbf{Y}^{t}$ such that $|T(\mathbf{Y}^{t})|\rightarrow 0$ as $t\rightarrow \infty$. Since $\alpha_{i}$ is a finite value for any $i$, then one can yield that there exist $t_0$ and a subdivision $\mathbf{Y}^{t_0}$ such that
\begin{align}\label{2021120801}
\max\limits_{k_{t_0}\in M_2(t_0)}\sum\limits_{i=1}^{n}{\alpha_{i}^{k_{t_0}}}\Big(\frac{a_{i}^{k_{t_0}}-b_{i}^{k_{t_0}}}{2}\Big)^2\leq\varepsilon.
\end{align}
Another step in the proof is $\cX_{ap}(t_0)\subseteq \cX_{op}^{\varepsilon}$. Suppose that  $\cX_{ap}(t_0)\nsubseteq\cX_{op}^{\varepsilon}$, that is, there exists $\hat{x}\in\cX_{ap}(t_0)$ such that $\hat{x}\notin\cX_{op}^{\varepsilon}$. Obviously, $\hat{x}\in\cX_{ap}^{C\cup NC}(t_0)$ and then one can find $\hat{y}\in X$ satisfied
\begin{align}\label{2022052201}
f(\hat{y})+\varepsilon< f(\hat{x}).
\end{align}
Without loss of generality, let $\hat{y}\in Y^{k_{t_0}}$  for $k_{t_0}\in M_{t_0}=M_{1}(t_0)\cup M_{2}(t_0)$.

If $\hat{y}\in\cX_{ap}^{k_{t_0}}\subseteq Y^{k_{t_0}}$, then according to (\ref{2022052201}) and $\cX_{ap}^{k_{t_0}}\subseteq\cX_{ap}^{C\cup NC}(t_0)$, it contradicts $\hat{x}\in\cX_{ap}(t_0)$. Thus $\hat{y}\in Y^{k_{t_0}}\backslash\cX_{ap}^{k_{t_0}}$.  Furthermore, there exists $\hat{z}\in \cX_{ap}^{k_{t_0}}$ satisfied  $F_{k_{t_0}}^{\alpha^{k_{t_0}}}(\hat{z})<F_{k_{t_0}}^{\alpha^{k_{t_0}}}(\hat{y})$ where  $k_{t_0}\in M_{t_0}$. Note that $M_{t_0}:=M_{1}(t_0)\cup M_{2}(t_0)$. Obviously, if $k_{t_0}\in M_{1}(t_0)$, i.e., $f$ is convex on $Y^{k_{t_0}}$, then $f(\hat{z})<f(\hat{y})$ and $f(\hat{z})< f(\hat{y})+\varepsilon <f(\hat{x})$ from (\ref{2022052201}).
This is contrary to $\hat{x}\in\cX_{ap}(t_0)$ as $\hat{z}\in\cX_{ap}^{k_{t_0}}\subset\cX_{ap}^{C}(t_0)$.
However, if $k_{t_0}\in M_{2}(t_0)$, then $\hat{y}\notin\cX_{ap}^{k_{t_0}}$ implies that there exists $\hat{z}\in\cX_{ap}^{k_{t_0}}\subset\cX_{ap}^{NC}(t_{0})$ such that $F_{k_{t_0}}^{\alpha^{k_{t_0}}}(\hat{z})< F_{k_{t_0}}^{\alpha^{k_{t_0}}}(\hat{y})$. Combining with (\ref{2022052201}), one can obtain  $F_{k_{t_0}}^{\alpha^{k_{t_0}}}(\hat{z})<f(\hat{x})-\varepsilon$, that is,
\begin{align*}
f(\hat{z})+\sum\limits_{i=1}^{n}{\alpha_{i}^{k_{t_0}}}(a_{i}^{k_{t_0}}-\hat{z}_{i}) (b_{i}^{k_{t_0}}-\hat{z}_{i})<f(\hat{x})-\varepsilon,
\end{align*}
which implies that $f(\hat{z})<f(\hat{x})$ by (\ref{2021120801}). This contradicts $\hat{x}\in\cX_{ap}(t_0)$ because of $\hat{z}\in\cX_{ap}^{k_{t_0}}\subset\cX_{ap}^{NC}(t_0)$. Consequently, we infer that $\hat{y}\notin Y^{k_{t_0}}\backslash\cX_{ap}^{k_{t_0}}$.

Apparently, the above proof are contrary to $\hat{y}\in Y^{k_{t_0}}$. Therefore, the theorem is now evident from what we have proved. 
\end{proof}

The above theorems  show that the solution set of the piecewise convexification optimization problem is a lower bound set of $\cX_{op}^{\varepsilon}$. In order to construct the upper bound set of $\cX_{op}^{\varepsilon}$, the approximation solution set of the piecewise convexification optimization problem is introduced, as defined by
\begin{align*}
\cX_{ap}^{\varepsilon}(t):=\left\{x\in\cX_{ap}^{C\cup NC,\varepsilon}(t): f(x)\leq f(y)+\varepsilon \text{~for~any~}y\in\cX_{ap}^{C\cup NC,\varepsilon}(t)\right\},
\end{align*}
where $\cX_{ap}^{C\cup NC,\varepsilon}(t):=\cX_{ap}^{C,\varepsilon}(t)\cup \cX_{ap}^{NC,\varepsilon}(t)$ and
\begin{align*}
\cX_{ap}^{C,\varepsilon}(t):=\bigcup\limits_{k_t\in M_{1}(t)}\cX_{ap}^{k_t,\varepsilon},~~
\cX_{ap}^{NC,\varepsilon}(t):=\bigcup\limits_{k_t\in M_{2}(t)}\cX_{ap}^{k_t,2\varepsilon},
\end{align*}
 $\cX_{ap}^{k_{t},\varepsilon}$ denotes the approximate optimal solution set of the convex relaxation sub-problem $F_{k_t}^{\alpha^{k_t}}$ on $Y^{k_t}$ with the quality $\varepsilon$, as defined by (\ref{cXk:20210725}).
 In what follows, we present that set $\cX_{ap}^{\varepsilon}(t)$ is a upper bound set of $\cX_{op}^{\varepsilon}$.
\begin{theorem}\label{theorem:2021082201}
For any $\varepsilon>0$, there exists $t\in \mathbb{N}$ and the subdivision $\mathbf{Y}^{t}$ of $X$ satisfied $$\max\limits_{k_t\in M_2(t)}\sum\limits_{i=1}^{n}\alpha_{i}^{k_t}\Big(\frac{b_{i}^{k_t}-a_{i}^{k_t}}{2}\Big)^2\leq\varepsilon$$ such that $\cX_{op}^{\varepsilon}\subseteq \cX_{ap}^{\varepsilon}(t)$.
\end{theorem}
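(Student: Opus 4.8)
The plan is to borrow the existence argument verbatim from the preceding theorem and then verify the set inclusion pointwise. First I would observe that the existence of $t$ and a subdivision $\mathbf{Y}^{t}$ meeting the hypothesis $\max_{k_t\in M_2(t)}\sum_{i=1}^{n}\alpha_i^{k_t}\big(\tfrac{b_i^{k_t}-a_i^{k_t}}{2}\big)^2\le\varepsilon$ is established exactly as in the proof of Theorem~\ref{theorem:20210822}: Lemma 5 of \cite{eichfelder2016modification} supplies a subdivision with $|T(\mathbf{Y}^{t})|\to 0$, and since every $\alpha_i^{k_t}$ is finite, the displayed quantity — which by \eqref{eq:2021052601} is precisely the maximal separation distance $D(Y^{k_t})$ on each non-convex box — can be driven below $\varepsilon$. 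So this first step requires no new idea.

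The substance is to fix $\hat{x}\in\cX_{op}^{\varepsilon}$ and show $\hat{x}\in\cX_{ap}^{\varepsilon}(t)$. By definition, membership demands two things: that $\hat{x}$ lie in $\cX_{ap}^{C\cup NC,\varepsilon}(t)$, and that $f(\hat{x})\le f(y)+\varepsilon$ for every $y$ in that set. The second condition is immediate: each $\cX_{ap}^{k_t,\cdot}$ is contained in $Y^{k_t}\subseteq X$, so $\cX_{ap}^{C\cup NC,\varepsilon}(t)\subseteq X$, and $\hat{x}\in\cX_{op}^{\varepsilon}$ already gives $f(\hat{x})\le f(y)+\varepsilon$ for all $y\in X$, a fortiori for all $y$ in the smaller set. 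Hence the entire burden reduces to showing $\hat{x}\in\cX_{ap}^{C\cup NC,\varepsilon}(t)$.

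For that, I would pick $k_t$ with $\hat{x}\in Y^{k_t}$ and split on whether $k_t\in M_1(t)$ or $k_t\in M_2(t)$. If $k_t\in M_1(t)$, then $\tilde{\lambda}_{Y^{k_t}}\ge 0$, so $\alpha^{k_t}=0$ and $F_{k_t}^{\alpha^{k_t}}\equiv f$ on $Y^{k_t}$; restricting the global $\varepsilon$-optimality of $\hat{x}$ to $Y^{k_t}$ then yields directly $\hat{x}\in\cX_{ap}^{k_t,\varepsilon}\subseteq\cX_{ap}^{C,\varepsilon}(t)$. If $k_t\in M_2(t)$, I would invoke the two-sided estimate $0\le f(x)-F_{k_t}^{\alpha^{k_t}}(x)\le D(Y^{k_t})\le\varepsilon$ valid for all $x\in Y^{k_t}$, which follows from $\alpha^{k_t}\ge 0$ together with \eqref{eq:2021052601} and the hypothesis. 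Then for any $y\in Y^{k_t}$ the chain $F_{k_t}^{\alpha^{k_t}}(\hat{x})\le f(\hat{x})\le f(y)+\varepsilon\le F_{k_t}^{\alpha^{k_t}}(y)+2\varepsilon$ shows $\hat{x}\in\cX_{ap}^{k_t,2\varepsilon}\subseteq\cX_{ap}^{NC,\varepsilon}(t)$ by the definition \eqref{cXk:20210725}. In either case $\hat{x}\in\cX_{ap}^{C\cup NC,\varepsilon}(t)$, and the proof closes.

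I expect the one genuinely delicate point to be the non-convex case, specifically justifying why the quality must be inflated to $2\varepsilon$ there. The under-estimator $F_{k_t}^{\alpha^{k_t}}$ differs from $f$ by as much as $\varepsilon$ on $Y^{k_t}$, so one full $\varepsilon$ is consumed in passing between $F$-values and $f$-values, while a second $\varepsilon$ comes from the original approximate optimality of $\hat{x}$; these are precisely the two slack terms in the chain above. This is exactly why $\cX_{ap}^{NC,\varepsilon}(t)$ is assembled from the $2\varepsilon$-sets $\cX_{ap}^{k_t,2\varepsilon}$ rather than the $\varepsilon$-sets, and keeping this error accounting tight is the crux of the argument.
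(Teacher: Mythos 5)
Your proof is correct and follows essentially the same route as the paper's: the same case split between convex boxes ($k_t\in M_1(t)$, where $F_{k_t}^{\alpha^{k_t}}\equiv f$) and non-convex boxes ($k_t\in M_2(t)$, where the separation bound $0\le f-F_{k_t}^{\alpha^{k_t}}\le\varepsilon$ supplies the $2\varepsilon$ inflation), with identical error accounting. The only difference is presentational: you argue directly that each $\hat{x}\in\cX_{op}^{\varepsilon}$ lands in $\cX_{ap}^{\varepsilon}(t)$, whereas the paper runs the contrapositive as a proof by contradiction; your version is if anything tidier, since the trivially satisfied second membership condition is dispatched up front rather than treated as a separate contradiction case.
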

\begin{proof}
Similarly,  there exist $t_0$ and a subdivision $\mathbf{Y}^{t_0}$ of $X$  such that (\ref{2021120801}) holds. It remains to show that $\cX_{op}^{\varepsilon}\subseteq \cX_{ap}^{\varepsilon}(t_0)$.  Assume that there exists $t_0$ such that $\cX_{op}^{\varepsilon}\nsubseteq\cX_{ap}^{\varepsilon}(t_{0})$. Then one can find $\hat{x}\in\cX_{op}^{\varepsilon}$ and  $\hat{x}\notin\cX_{ap}^{\varepsilon}(t_0)$. Now, $\hat{x}\notin\cX_{ap}^{\varepsilon}(t_0)$ can be distinguished two cases, the first of which is $\hat{x}\in\cX_{ap}^{C\cup NC,{\varepsilon}}(t_0)$. It implies that  there exists $\hat{y}\in\cX_{ap}^{C\cup NC,{\varepsilon}}(t_0)\subseteq X$ satisfied $f(\hat{y})+{\varepsilon}<f(\hat{x})$. This contradicts the fact $\hat{x}\in\cX_{op}^{\varepsilon}$, that is, the first case is not true.

The second case is $\hat{x}\notin\cX_{ap}^{C\cup NC,{\varepsilon}}(t_0)$. For this subdivision $\mathbf{Y}^{t_0}$, there exists $k_{t_0}\in M_{t_0}=M_1(t_0)\cup M_2(t_0)$ such that $\hat{x}\in Y^{k_{t_0}}$.  If $k_{t_0}\in M_{1}(t_0)$, then $\cX_{ap}^{k_{t_0},\varepsilon}\subseteq \cX_{ap}^{C,\varepsilon}(t_0)$. Moreover, $\hat{x}\notin\cX_{ap}^{C\cup NC,{\varepsilon}}(t_0)$ indicates that $\hat{x}\notin\cX_{ap}^{k_{t_0},\varepsilon}$. This means that there exists $\hat{y}\in Y^{k_{t_0}}$ satisfied $F_{k_{t_0}}^{\alpha_{k_{t_0}}}(\hat{y})+\varepsilon<F_{k_{t_0}}^{\alpha_{k_{t_0}}}(\hat{x})$. Obviously, $f$ is convex on $Y^{k_{t_0}}$ for $k_{t_0}\in M_{1}(t_0)$, that is, $F_{k_{t_0}}^{\alpha^{k_{t_0}}}(x)=f(x)$ for any $x\in Y^{k_{t_0}}$. Then  it conducts that $f(\hat{y})+\varepsilon=F_{k_{t_0}}^{\alpha^{k_{t_0}}}(\hat{y})+\varepsilon<F_{k_{t_0}}^{\alpha^{k_{t_0}}}(\hat{x})=f(\hat{x})$, which contradicts $\hat{x}\in\cX_{op}^{\varepsilon}$. This yields to $k_{t_0}\notin M_{1}(t_0)$, that is, $k_{t_0}\in M_{2}(t_0)$. This indicates that $\cX_{ap}^{k_{t_0},2\varepsilon}\subseteq \cX_{ap}^{NC,\varepsilon}(t_0)$ and $\hat{x}\notin\cX_{ap}^{k_{t_0},2\varepsilon}$ by $\hat{x}\notin\cX_{ap}^{C\cup NC,{\varepsilon}}(t_0)$. Thus there exists $\hat{y}\in Y^{k_t}$ satisfied $F_{k_{t_0}}^{\alpha^{k_{t_0}}}(\hat{y})+2\varepsilon<F_{k_{t_0}}^{\alpha^{k_{t_0}}}(\hat{x})$, which implies that $f(\hat{y})+\varepsilon<f(\hat{x})$ from (\ref{2021120801}).
This contradicts $\hat{x}\in\cX_{op}^{\varepsilon}$, which means that $k_{t_0}\in M_{2}(t_0)$ is also false. Consequently, the second case would not hold.

Therefore, the  assumption is not true, that is, $\cX_{op}^{\varepsilon}\subseteq\cX_{ap}^{\varepsilon}(t_{0})$ holds.
\end{proof}
\begin{remark}
Combing Theorems \ref{theorem:20210822} and \ref{theorem:2021082201}, the lower and upper bound sets of the approximate solution set of the original non-convex optimization problem are obtained by constructing solution sets of the piecewise convexification optimization problem, that is,
\begin{align*}
\cX_{ap}(t)\subseteq\cX_{op}^{\varepsilon}\subseteq \cX_{ap}^{\varepsilon}(t), \forall \varepsilon>0,
\end{align*}
where $t$ satisfies that $\max\limits_{k_t\in M_2(t)}\sum\limits_{i=1}^{n}{\alpha_{i}^{k_t}}\Big(\frac{a_{i}^{k_t}-b_{i}^{k_t}}{2}\Big)^2\leq\varepsilon$.
\end{remark}

{\section{The piecewise convexification algorithm}}
In this section, the piecewise convexification algorithm for the non-convex optimization problem is designed. Furthermore, we verity that this algorithm can output a subset of the approximate global optimal solutions set.
%
%
%
%
Some notations in the table must be introduced, which will be used in the algorithm.
\begin{table}[thp]\small
\begin{tabular}{ll}
\toprule[0.2mm]
Abbreviation & Denotation\\ \hline \specialrule{0em}{2pt}{2pt}
$\widehat{X}$   & The sub-box of $X$\\
$\hat{\mu}$  & The function values of $F_{\widehat{X}}^{\widehat{\alpha}}$ on $\widehat{X}$    \\
$\cX_{ap}^{\widehat{X}}$ & The optimal solution set of $F_{\widehat{X}}^{\widehat{\alpha}}$ on $\widehat{X}$\\
$v_{glob}$  & The smallest objective function value found for all current sub-boxes \\
$w(\widehat{X},\widehat{\alpha})$        & The modified width of $\widehat{X}$ and  $w(\widehat{X},\widehat{\alpha}):= \sum\limits_{i=1}^{n}{\widehat{\alpha}_{i}}\Big(\frac{\widehat{a}_{i}-\widehat{b}_{i}}{2}\Big)^2$ \\
$\tilde{\lambda}_{\widehat{X}}$  & A lower bound of $\lambda_{\min}(\widehat{X})$ computed by (\ref{lambda:2021051702})     \\
\specialrule{0em}{2pt}{2pt}
\toprule[0.2mm]
\end{tabular}
\end{table}

In what follows, we present the piecewise convexification algorithm to obtain a subset of the approximate global optimal solution set, which uses selection rule, discarding and termination tests,  as shown in Algorithm \ref{alg:Framwork}.
\begin{algorithm}[thb]\small
\caption{The piecewise convexification algorithm for {\rm(NCOP)}.}
\label{alg:Framwork}
\hspace*{0.02in} {\bf Input:} 
$X^0=[a,b]\in\mathbb{I}^{n}$, $f\in\mathbb{C}^2(\mathbb{R}^n,\mathbb{R}),\varepsilon>0;$
\hspace*{0.02in} {\bf Output:}
$\cX_{ap}^{new}$;
\begin{algorithmic}[1]
\State Compute an $\alpha^{0}$ of $f$ on $X^0$ according to (\ref{alpha:2021051201})-(\ref{sum:2021051701});
\label{ code:fram:extract }
\State Set $X^{*}:=X^0$, $x^{*}:=\frac{a+b}{2}$, $\mu^{*}:=-\infty$, $\alpha^{*}:=\alpha_0$, $x_{act}:=x^{*}$, $v_{glob}=v_{act}=+\infty$;
\State $L_{NC}:=\{(X^{*},x^{*},\mu^{*},\alpha^{*})\}$, $M_{C}=M_{D}=\emptyset$ and $k:=0$.
\While {$L_{NC}\neq\emptyset$ and $\max\limits_{(\widetilde{X},\widetilde{x},\widetilde{\mu},\widetilde{\alpha})\in L_{NC}}w(\widetilde{X},\widetilde{\alpha})>\varepsilon$}
\State $k:=k+1$;
\State Define $(X^{*},x^{*},\mu^{*},\alpha^{*})$ at the first element of $L_{NC}$ with $\max\limits_{(\widetilde{X},\widetilde{x},\widetilde{\mu},\widetilde{\alpha})\in L_{NC}}w(\widetilde{X},\widetilde{\alpha})$.
\State Delete $(X^{*},x^{*},\mu^{*},\alpha^{*})$ from $L_{NC}$.
\For{all $\widehat{X}\in Sp(X^{*})$}
\State Compute $\tilde{\lambda}_{\widehat{X}}$ by (\ref{lambda:2021051702}) and $\widehat{\alpha}$ by (\ref{alpha:2021051201}), respectively.
\State Compute
$\widehat{x}\in\arg\min\limits_{x\in\widehat{X}}F_{\widehat{X}}^{\widehat{\alpha}}(x)$. Let $\widehat{\mu}=\min\limits_{x\in\widehat{X}}F_{\widehat{X}}^{\widehat{\alpha}}(x)$.
\If {$\widehat{\mu}\leq v_{glob}$}
\If {$\tilde{\lambda}_{\widehat{X}}\geq 0$}
\State Add $(\widehat{X},\widehat{x},\widehat{\mu},\widehat{\alpha})$ as the last element to $M_{C}$ and add $\widehat{x}$ to $\cX_{ap}^{\widehat{X}}$.
\Else
\State Add $(\widehat{X},\widehat{x},\widehat{\mu},\widehat{\alpha})$ as the last element to $L_{NC}$ and add $\widehat{x}$ to $\cX_{ap}^{\widehat{X}}$.
\EndIf
\If {$f(\widehat{x})\leq v_{act}$}
\State Set $x_{act}=\widehat{x}, v_{act}=f(x_{act}),v_{glob}=\min\{v_{act},v_{glob}\}$
\State Delete $(X,x,\mu,\alpha)\in L_{NC}$ with $\mu>v_{glob}$ from $L_{NC}$, and
\State add $(X,x,\mu,\alpha)\in L_{NC}$ with $\mu>v_{glob}$ to $M_{D}$.
\EndIf
\Else
\State Add $(\widehat{X},\widehat{x},\widehat{\mu},\widehat{\alpha})$ as the last element to $M_{D}$.
\EndIf
\EndFor
\EndWhile
\State $M:=\bigcup\{\widetilde{X}:(\widetilde{X},\widetilde{x},\widetilde{\mu},\widetilde{\alpha})\in L_{UC}\}\cup M_{C}$. \vspace{0.2cm}
\State$\cX_{ap}^{new}:=\left\{x\in\bigcup\limits_{\widehat{X}\in M}\cX_{ap}^{\widehat{X}}:~ f(x)\leq f(y), ~~\forall ~ y\in\bigcup\limits_{\widehat{X}\in M}\cX_{ap}^{\widehat{X}}\right\}$.
\end{algorithmic}
\end{algorithm}

This algorithm terminates after finitely many iterations because there exists the number of subdivisions $t$ such that $\max\limits_{(\widetilde{X},\tilde{x},\tilde{\mu},\tilde{\alpha})\in L_{NC}}w(\widetilde{X},\tilde{\alpha})\leq\varepsilon$.
Noting that this algorithm applies the same discarding technique to deleting sub-boxes as the modified $\alpha$BB method \cite{eichfelder2016modification}. Thus it holds that $\widehat{X}\cap \cX_{op}=\emptyset$ when $\hat{\mu}>v_{glob}$. However, the termination conditions and the selection way of sub-box for the division are different. Compared with \cite{eichfelder2016modification}, there is only one termination condition, that is, $L=\emptyset$. But this piecewise convexification algorithm sets two termination conditions,  that is,
 $\max\limits_{(\widetilde{X},\tilde{x},\tilde{\mu},\tilde{\alpha})\in L_{NC}}w(\widetilde{X},\widetilde{\alpha})\leq\varepsilon$ or $L_{NC}=\emptyset$. Clearly, if $L_{NC}=\emptyset$, then $\max\limits_{(\widetilde{X},\tilde{x},\tilde{\mu},\tilde{\alpha})\in L_{NC}}w(\widetilde{X},\tilde{\alpha})=0$, and not vice versa, i.e., $L_{NC}\neq \emptyset$ may be hold  when  $\max\limits_{(\widetilde{X},\tilde{x},\tilde{\mu},\tilde{\alpha})\in L_{NC}}w(\widetilde{X},\tilde{\alpha})\leq\varepsilon$. 
The following numerical experiment results, in section 5, will show that for some complex problems, these two termination conditions are more conducive to speeding up the algorithm than having only one termination condition in the modified $\alpha$BB method \cite{eichfelder2016modification}. In this algorithm, we put a new selection rule of sub-box for the division, that is, the box with the maximum  modified width in $L_{NC}$  is selected to divide into two sub-boxes as shown in line 5. This selection approach is different from the one proposed in \cite{eichfelder2016modification}. Moreover, different from the modified $\alpha$BB method \cite{eichfelder2016modification}, this piecewise convexification algorithm only requires a while loop and does not require additional parameters.

\begin{remark}\label{remark:20210922}
From the piecewise convexification algorithm framework,  the union of sets $\bigcup\limits_{(\widehat{X},\widehat{x},\widehat{\mu},\widehat{\alpha})\in M_{1}}\widehat{X}$ is a subdivision of $X$ where $M_{1} = M \cup L_{NC}$.
\end{remark}
The following theorem is illustrated that the output of this algorithm $\cX_{ap}^{new}$ is a subset of the approximate global solution set.
\begin{theorem}
At the end of the algorithm 1 the set $\cX_{ap}^{new}$ is a subset of the approximate global optimal solution set $\cX_{op}^{\varepsilon}$.
\end{theorem}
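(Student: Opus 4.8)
The plan is to collapse the statement to a single scalar inequality and then use the fact that fathoming never deletes a box containing a global minimizer. Write $v^{*}:=\min_{x\in X}f(x)$ and $U:=\bigcup_{\widehat{X}\in M}\cX_{ap}^{\widehat{X}}$, so that by construction $\cX_{ap}^{new}=\{x\in U: f(x)=\min_{y\in U}f(y)\}$ and $U\subseteq X$ (each $\cX_{ap}^{\widehat{X}}\subseteq\widehat{X}\subseteq X$). Since $U$ is a finite union of nonempty compact argmin-sets, the value $\min_{U}f$ is attained, and every member of $\cX_{ap}^{new}$ shares this value. Because $\cX_{op}^{\varepsilon}=\{x\in X: f(x)\leq v^{*}+\varepsilon\}$, the whole theorem follows once I prove the one inequality
\[
\min_{x\in U}f(x)\leq v^{*}+\varepsilon .
\]
Thus the task reduces to exhibiting a single point of $U$ whose objective value is within $\varepsilon$ of the global optimum.

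To produce that point I would fix a global minimizer $x^{*}\in\cX_{op}$. The boxes ever produced by the splitting operator (the surviving leaves collected in $M$ together with the fathomed boxes in $M_{D}$) partition $X$, so $x^{*}$ lies in exactly one leaf box. Moreover $x^{*}$ cannot lie in a discarded box: since $F_{\widehat{X}}^{\widehat{\alpha}}\leq f$ on $\widehat{X}$ we have $\widehat{\mu}\leq\min_{\widehat{X}}f$, and $v_{glob}=f(x_{act})\geq v^{*}$ for the incumbent feasible point $x_{act}$, so the discarding test $\widehat{\mu}>v_{glob}$ forces $\widehat{X}\cap\cX_{op}=\emptyset$ (as already observed after the algorithm). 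Hence there is a leaf box $Y^{*}\in M$ with $x^{*}\in Y^{*}$, whose stored relaxation minimizer $\widehat{x}^{*}\in\cX_{ap}^{Y^{*}}\subseteq U$.

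Next I would bound $f(\widehat{x}^{*})$ according to the type of $Y^{*}$. If $Y^{*}$ is convex, i.e. $\tilde{\lambda}_{Y^{*}}\geq 0$ and $Y^{*}\in M_{C}$, then $F_{Y^{*}}^{\widehat{\alpha}}=f$ on $Y^{*}$, so $\widehat{x}^{*}$ minimizes $f$ over $Y^{*}$ and $f(\widehat{x}^{*})\leq f(x^{*})=v^{*}$. If $Y^{*}$ is non-convex, it survived in $L_{NC}$ at termination, so the stopping test guarantees $w(Y^{*},\widehat{\alpha})\leq\varepsilon$; combining the optimality of $\widehat{x}^{*}$ for $F_{Y^{*}}^{\widehat{\alpha}}$ with the separation bound $0\leq f-F_{Y^{*}}^{\widehat{\alpha}}\leq w(Y^{*},\widehat{\alpha})$ from (\ref{eq:2021052601}) gives
\[
f(\widehat{x}^{*})\leq F_{Y^{*}}^{\widehat{\alpha}}(\widehat{x}^{*})+\varepsilon\leq F_{Y^{*}}^{\widehat{\alpha}}(x^{*})+\varepsilon\leq f(x^{*})+\varepsilon=v^{*}+\varepsilon .
\]
In either case $\min_{U}f\leq f(\widehat{x}^{*})\leq v^{*}+\varepsilon$, which closes the reduction: every $x\in\cX_{ap}^{new}$ satisfies $f(x)=\min_{U}f\leq v^{*}+\varepsilon$, hence $x\in\cX_{op}^{\varepsilon}$, so $\cX_{ap}^{new}\subseteq\cX_{op}^{\varepsilon}$.

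The main obstacle is precisely the interplay between fathoming and the termination guarantee. The argument mirrors the proof of Theorem \ref{theorem:20210822}, but there the small-width bound $w\leq\varepsilon$ is imposed on every box of a full subdivision, whereas here it is enforced only on the surviving non-convex leaves $L_{NC}$; some discarded boxes may never have had their width driven below $\varepsilon$. The delicate step is therefore to guarantee that the particular leaf containing a global minimizer both escapes discarding and, when non-convex, still carries the $w\leq\varepsilon$ estimate, so that the separation bound can be applied to it. Once that localization is secured, the rest is the routine $\alpha$BB estimate used above.
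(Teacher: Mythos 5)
Your proof is correct, but it does not follow the paper's own route: the paper simply omits the proof, declaring it ``similar to Theorem \ref{theorem:20210822}'', whose argument is by contradiction --- take $\hat{x}\in\cX_{ap}(t_0)\setminus\cX_{op}^{\varepsilon}$, produce a competitor $\hat{y}$ with $f(\hat{y})+\varepsilon<f(\hat{x})$, locate $\hat{y}$ in a sub-box of the subdivision, and contradict the minimality of $\hat{x}$ via the relaxation minimizer of that sub-box. Your argument is instead direct and constructive: you fix a global minimizer $x^{*}$, show its leaf box survives into $M$, and bound $f$ at that leaf's stored relaxation minimizer, splitting into the convex case ($F=f$ on boxes in $M_C$) and the non-convex case ($w\leq\varepsilon$ for boxes still in $L_{NC}$ at termination). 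Both routes rest on the same two estimates ($F\leq f\leq F+w$ and $F=f$ on convex-flagged boxes), but your version supplies exactly the ingredient that Theorem \ref{theorem:20210822} never needs and that the paper's ``similar\dots omitted'' glosses over: the discarding step. In the algorithm the width bound is enforced only on surviving non-convex leaves, while boxes in $M_D$ may be arbitrarily wide, so one must rule out that the global minimizer's leaf was fathomed; your observation that $\widehat{\mu}\leq\min_{\widehat{X}}f$ and $v_{glob}\geq v^{*}$ imply $\widehat{X}\cap\cX_{op}=\emptyset$ for discarded boxes is precisely that missing localization, and any contradiction-style adaptation of Theorem \ref{theorem:20210822} (where the competitor $\hat{y}$ might sit in a discarded box) would have to reduce to the same step. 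One small caveat: your identity $F_{Y^{*}}^{\widehat{\alpha}}=f$ on boxes in $M_{C}$ relies on the paper's convention from Section 3.1 that $\alpha^{k_t}=0$ whenever $\tilde{\lambda}_{Y^{k_t}}\geq 0$ (the algorithm's line 9, read literally, computes $\widehat{\alpha}$ by (\ref{alpha:2021051201}) in all cases, which need not vanish); under the paper's stated convention your proof is complete.
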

\begin{proof}
The proof process is similar to Theorem \ref{theorem:20210822} and will be omitted.
\end{proof}

\section{Numerical Experiments}
In this section, we  demonstrate the efficiency of this piecewise convexification algorithm and compare it with $\text{the~mod}~\alpha_{i,d=u-l}^{loc} BB$\upcite{eichfelder2016modification}. All computations have been performed on a computer with Iter(R)Core(TM)i5-8250U CPU and 8 Gbytes RAM. In numerical results, we use the following notations.

\begin{table}[thp]
\begin{tabular}{ll}
\toprule[0.2mm]
Abbreviation & Denotation                                              \\ \hline
$iter$        & Number of required iterations                           \\
CPU & Required CPU time in seconds\\
$N_{\varepsilon}$  & Number of $\varepsilon$-optimal solutions from algorithm, let $\varepsilon=10^{-3}$ \\
$flag_{ter}$ & The indicator of termination condition of algorithm, $flag_{ter}=1\text{~or~}0$ \\
PC-NCOP & The piecewise convexification algorithm for (NCOP), i.e. Algorithm \ref{alg:Framwork}\\
-            & The algorithm does not record a certain value \\
\toprule[0.2mm]
\end{tabular}
\end{table}

In this paper, $flag_{ter}=1$ indicates that the termination condition of this algorithm is $L_{NC}=\emptyset$, otherwise, $flag_{ter}=0$ when $\max\limits_{(\tilde{X},\tilde{x},\tilde{\mu},\tilde{\alpha})\in L_{NC}}w(\tilde{X},\tilde{\alpha})\leq\varepsilon.$ Moreover, we replace $\bar{\mu}\leq v_{glob}$ in line 11 by $\bar{\mu}\leq v_{glob}+10^{-6}$ for all instances.

As stated in \cite{eichfelder2016modification}, authors use INTLAB ToolBox to automaticly compute the elements $\nabla^2f(X)_{ij}$.  In fact, we directly solve the optimization problem $\min\limits_{x\in\bar{X}}\nabla^2f(\bar{X})_{ij}$, to estimate $\alpha_{i}^{k_n}$ on each sub-box rather than using INTLAB.
Therefore, we rewrite the modified {\rm$\alpha$BB} method \cite{eichfelder2016modification}, without INTLAB.
First, we demonstrate the performance of two approaches on nine test instances with finite optimal solutions, in which most test instances have multiple optimal solutions and all examples are two-dimensional. Some information of these test instances about the objective functions $f$, feasible sets $X^{0}$, number of globally solutions, and global optimal values are listed in Tables \ref{table:f1}.
\begin{table}[thp]
\centering
\small
\setlength{\abovecaptionskip}{0.1cm}
\setlength{\belowcaptionskip}{0.1cm}
\captionsetup{font={small}}
\caption{Test instances with finite number of optimal solutions}
\label{table:f1}
\centering 
\resizebox{\textwidth}{!}
{
\begin{tabular}{lllcc} 
\toprule[0.3mm]
\specialrule{0em}{2pt}{2pt}
&\multicolumn{1}{c} {$f: \mathbb{R}^{2}\rightarrow \mathbb{R}$ with $f(x)=$} & \multicolumn{1}{c} {$X$} & $|\arg\min\limits_{x\in X}f(x)|$ & \multicolumn{1}{c}{$\min\limits_{x\in X}f(x)$}
\\\specialrule{0em}{2pt}{2pt}
\hline 
\specialrule{0em}{3pt}{3pt}
{\bf Rastrigin}\upcite{eichfelder2016modification} &$20+x_{1}^{2}+x_{2}^{2}-10(\cos(2\pi x_{1})+\cos(2\pi x_{2}))$ & $\left[\left(
 \begin{matrix}-5.12\\-5.12\end{matrix}\right),\left(\begin{matrix}5.12\\5.12\end{matrix}\right)\right]$
& 1 & 0 \\
\specialrule{0em}{2pt}{2pt}
{\bf 6-Hump}\upcite{epitropakis2011finding} &$(4-2.1x_{1}^{2}+\frac{1}{3}x_{1}^4)x_{1}^{2}+x_{1}x_{2}-(4-4x_{2}^2)x_{2}^{2}$ & $\left[\left(
 \begin{matrix}-1.9\\-1.1\end{matrix}\right),\left(\begin{matrix}1.9\\1.1\end{matrix}\right)\right]$
& 2 & $\approx-1.031629$ \\
\specialrule{0em}{2pt}{2pt}
{\bf Branin}\upcite{epitropakis2011finding} &$(x_{2}-\frac{5.1}{4\pi^2}x_{1}^{2}+\frac{5}{\pi}x_{1}-6)^2+10(1-\frac{1}{8\pi})\cos(x_{1})+10$ & $\left[\left(
 \begin{matrix}-5\\0\end{matrix}\right),\left(\begin{matrix}10\\15\end{matrix}\right)\right]$
& 3 & $\approx 0.397886$ \\
\specialrule{0em}{2pt}{2pt}
{\bf Himmelblau}\upcite{epitropakis2011finding} &$(x_{1}^{2}+x_{2}-11)^2+(x_{1}+x_{2}^{2}-7)^2$ & $\left[\left(
 \begin{matrix}-6\\-6\end{matrix}\right),\left(\begin{matrix}6\\6\end{matrix}\right)\right]$
& 4 & 0 \\
\specialrule{0em}{2pt}{2pt}
{\bf Rastrigin mod}\upcite{epitropakis2011finding} &$20+x_{1}^{2}+x_{2}^{2}+10(\cos(2\pi x_{1})+\cos(2\pi x_{2}))$ & $\left[\left(
 \begin{matrix}-5.12\\-5.12\end{matrix}\right),\left(\begin{matrix}5.12\\5.12\end{matrix}\right)\right]$
& 4 & $\approx0.497480$ \\
\specialrule{0em}{2pt}{2pt}
{\bf Shubert}\upcite{epitropakis2011finding} &$\sum\limits_{i=1}^{5}[i\cos((i+1)x_{1}+1)]\sum\limits_{j=1}^{5}[j\cos((j+1)x_{2}+j)]$ & $\left[\left(
 \begin{matrix}-10\\-10\end{matrix}\right),\left(\begin{matrix}10\\10\end{matrix}\right)\right]$
& 18 & $\approx-186.730909$ \\
\specialrule{0em}{2pt}{2pt}
{\bf Deb 1} \upcite{epitropakis2011finding} &$-\frac{1}{2}(\sin^{6}(5\pi x_{1})+\sin^{6}(5\pi x_{2})) $ & $\left[\left(
 \begin{matrix}0\\0\end{matrix}\right),\left(\begin{matrix}1\\1\end{matrix}\right)\right]$
& 25 & $-1$ \\
\specialrule{0em}{2pt}{2pt}
{\bf Vincent}\upcite{epitropakis2011finding} &$-\frac{1}{2}(\sin(10\ln(x_{1}))+\sin(10\ln(x_{2})))$ & $\left[\left(
 \begin{matrix}0.25\\0.25\end{matrix}\right),\left(\begin{matrix}10\\10\end{matrix}\right)\right]$
& 36 & $-1$ \\
\specialrule{0em}{2pt}{2pt}
\hline
\toprule[0.2mm]
\end{tabular}
}
\end{table}

Numerical results of these two algorithms are presented in Table \ref{Numerical1}.
\begin{table}[thp]
\setlength{\abovecaptionskip}{0.1cm}
\setlength{\belowcaptionskip}{0.1cm}
\captionsetup{font={small}}
\caption{Numerical results for test instances in Table \ref{table:f1}} 
\centering\
\resizebox{\textwidth}{!}
{
\label{Numerical1}
\setlength{\tabcolsep}{5mm}{
\begin{tabular}{lllll}
\toprule[0.2mm]
\multicolumn{5}{c}{PC-NCOP/$\text{the~mod}~\alpha_{i,d=u-l}^{loc} BB$\upcite{eichfelder2016modification}}\\ \hline
     &\multicolumn{1}{c} {$iter$} &\multicolumn{1}{c} {CPU} &\multicolumn{1}{c}{$N_{\varepsilon}$} &\multicolumn{1}{c} {$flag_{ter}$} \\ \hline
{\bf Rastrigin}     &{\bf 104}/551   &{\bf 1.045}/30.959    &{\bf 1}/{\bf 1}      & 1/1 \\\specialrule{0em}{1pt}{1pt}
{\bf 6-Hump}        &{\bf47}/49     &{\bf0.402}/0.418        &{\bf2}/{\bf1}      & 1/1\\\specialrule{0em}{1pt}{1pt}
{\bf Branin}        &{\bf52}/67     &{\bf0.576}/0.726       &2/2     & 0/1\\\specialrule{0em}{1pt}{1pt}
{\bf Himmelblau}    &{\bf43}/382     &{\bf0.558}/4.345       &{\bf4}/{\bf4}      & 0/1 \\\specialrule{0em}{1pt}{1pt}
{\bf Rastrigin mod} &{\bf571}/859   &{\bf6.734}/11.046   &{\bf4}/{\bf 4}          & 1/1 \\\specialrule{0em}{1pt}{1pt}
{\bf Shubert}       &{\bf3091}/5056 &{\bf56.073}/136.460    &{\bf18}/{\bf18}    & 0/1 \\\specialrule{0em}{1pt}{1pt}
{\bf Deb 1}         &{\bf391}/863   &{\bf5.390}/17.009    &{\bf25}/{\bf25}    & 1/1 \\\specialrule{0em}{1pt}{1pt}
{\bf Vincent}       &{\bf1169}/11705 &{\bf10.820}/166.680    &{\bf36}/{\bf 36}   & 1/1 \\\specialrule{0em}{1pt}{1pt}
\toprule[0.2mm]
\end{tabular}
}}
\end{table}
It is easy to see that for all the test examples, the  $iter$ and CPU values of PC-NCOP are significantly less than those of $\text{the~mod}~\alpha_{i,d=u-l}^{loc} BB$\upcite{eichfelder2016modification}. As for the $|N_{\varepsilon}|$ values, only a slight difference for {\bf Branin} exists, that is, two algorithms can only find two globally optimal solutions, not three. These results demonstrate that the proposed algorithm can find almost all the optimal solutions of the  original non-convex problem, and PC-NCOP is better than $\text{the~mod}~\alpha_{i,d=u-l}^{loc} BB$\upcite{eichfelder2016modification}. In addition, for {\bf Branin, Himmelblau} and {\bf Shubert} the termination condition of PC-NCOP is the same, that is, $\max\limits_{(\tilde{X},\tilde{x},\tilde{\mu},\tilde{\alpha})\in L_{NC}}w(\tilde{X},\tilde{\alpha})\leq\varepsilon$. The termination condition of PC-NCOP for other remaining instances is $\ L_{NC}=\emptyset$.

The division results of $X$ are clearly shown in Fig. \ref{figure: finite test instances},
\begin{figure}[htbp]
\setlength{\abovecaptionskip}{0.1cm}
\setlength{\belowcaptionskip}{0.1cm}
\centering
\captionsetup{font={scriptsize}}
\subfigure[Rastrigin]{
\includegraphics[scale=0.1425]{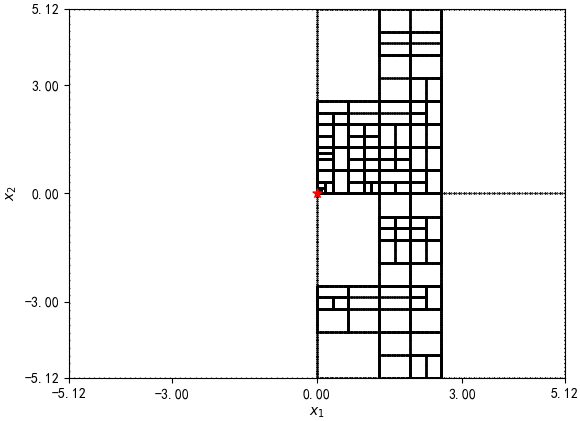} \label{1}
}\hspace{-3mm}
\subfigure[Rastrigin\upcite{eichfelder2016modification}]{
\includegraphics[scale=0.1425]{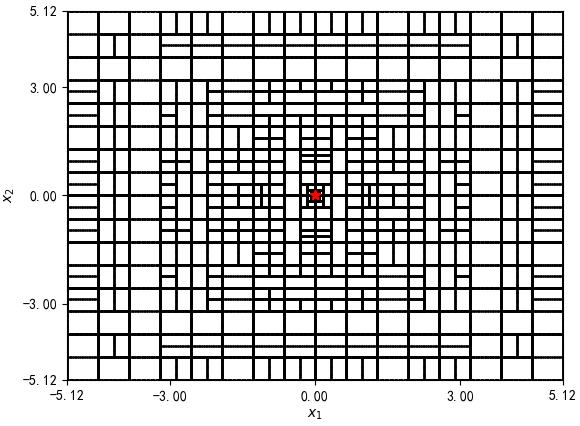}
}\hspace{-3mm}
\subfigure[6-Hump]{
\includegraphics[scale=0.1425]{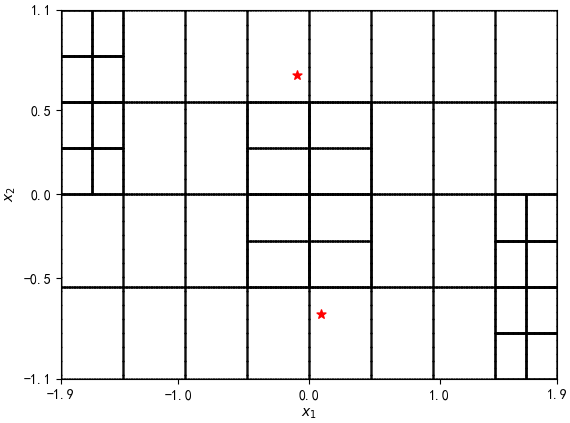} \label{2}
}
\hspace{-3mm}
\subfigure[6-Hump\upcite{eichfelder2016modification}]{
\includegraphics[scale=0.1425]{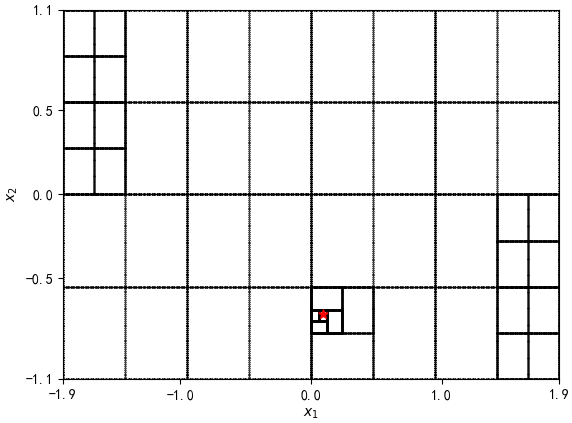} \label{2}
}
\hspace{-3mm}
\subfigure[Brain]{
\includegraphics[scale=0.144]{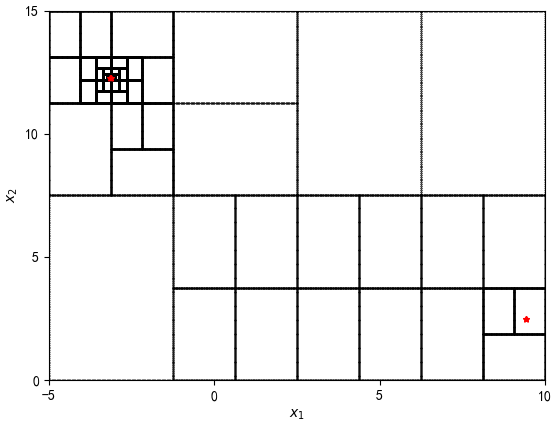}\label{3}
}
\hspace{-3mm}
\subfigure[Brain\upcite{eichfelder2016modification}]{
\includegraphics[scale=0.144]{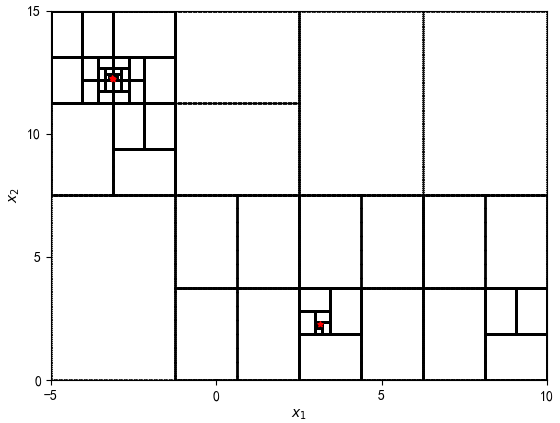}\label{3}
}
\hspace{-3mm}
\subfigure[Himmelblau]{
\includegraphics[scale=0.144]{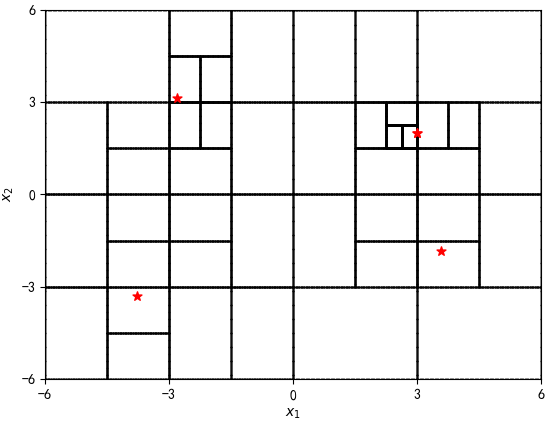}\label{4}
}
\hspace{-3mm}
\subfigure[Himmelblau\upcite{eichfelder2016modification}]{
\includegraphics[scale=0.144]{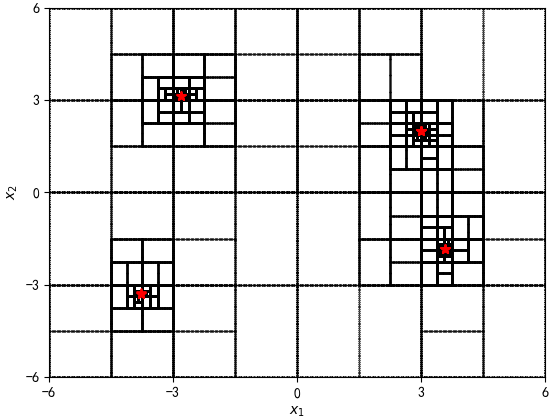}\label{4}
}
\hspace{-4mm}
\subfigure[Rastrigin mod]{
\includegraphics[scale=0.141]{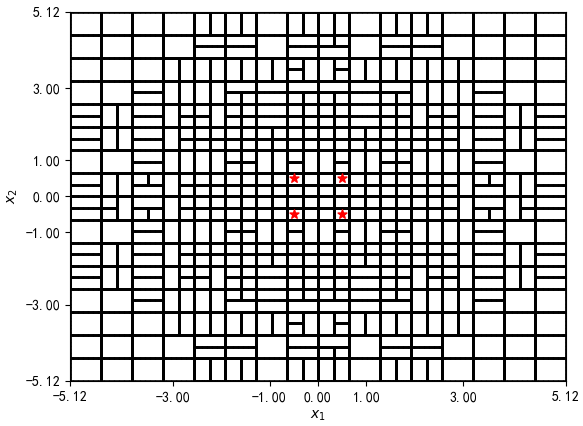}\label{4}
}
\hspace{-4mm}
\subfigure[Rastrigin mod\upcite{eichfelder2016modification}]{
\includegraphics[scale=0.141]{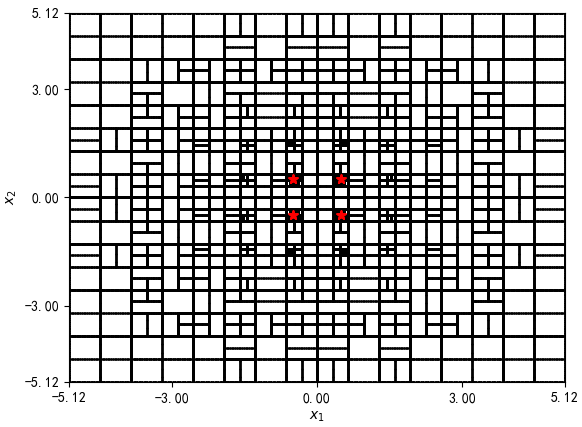}\label{4}
}
\hspace{-4mm}
\subfigure[Shubert]{
\includegraphics[scale=0.141]{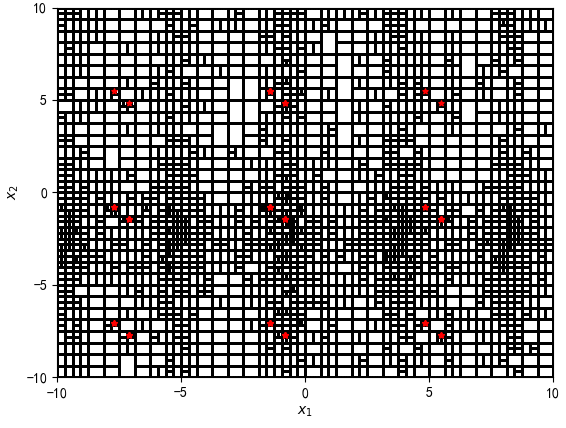}\label{4}
}
\hspace{-4mm}
\subfigure[Shubert\upcite{eichfelder2016modification}]{
\includegraphics[scale=0.141]{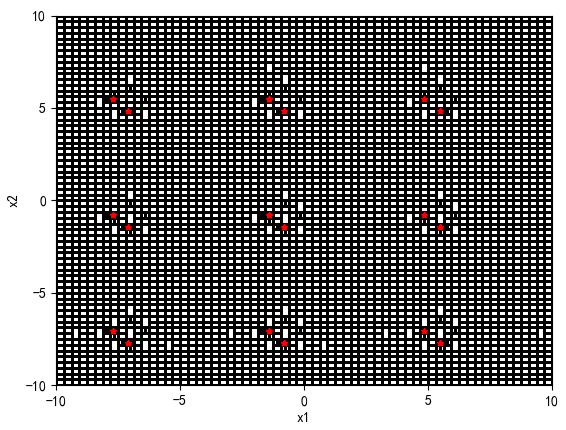}\label{4}
}
\hspace{-4mm}
\subfigure[Deb 1]{
\includegraphics[scale=0.142]{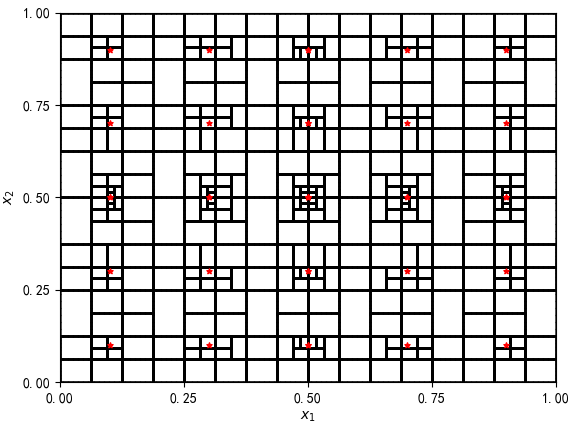}\label{4}
}
\hspace{-4mm}
\subfigure[Deb 1\upcite{eichfelder2016modification}]{
\includegraphics[scale=0.142]{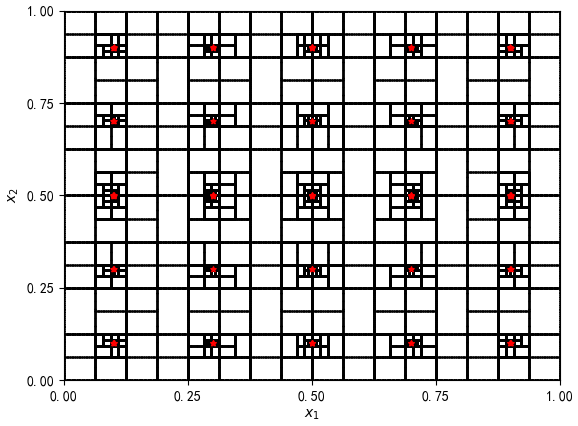}\label{4}
}
\hspace{-4mm}
\subfigure[Vincent]{
\includegraphics[scale=0.142]{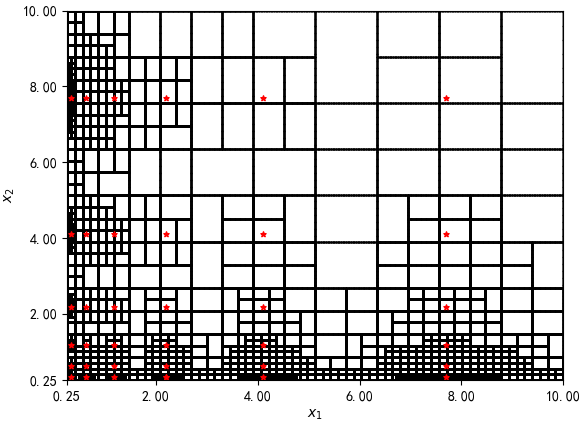}\label{4}
}
\hspace{-4mm}
\subfigure[Vincent\upcite{eichfelder2016modification}]{
\includegraphics[scale=0.142]{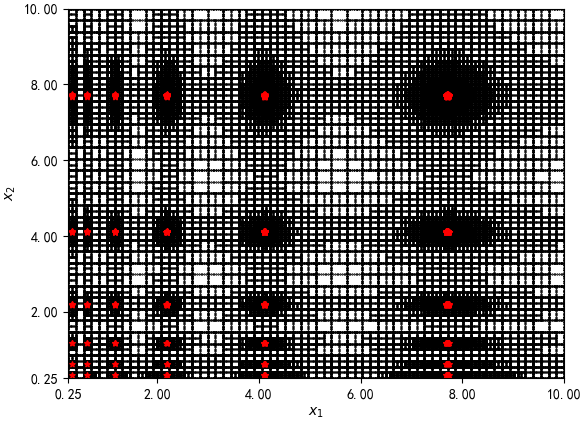}\label{4}
}
\captionsetup{font={small}}
\caption{Interval subdivision results of test instances in Table \ref{table:f1}}
\label{figure: finite test instances}
\end{figure}
\vspace{-0.2em}
where the first and third columns show the results  obtained by Algorithm \ref{alg:Framwork}, while the results of $\text{the~mod}~\alpha_{i,d=u-l}^{loc} BB$\upcite{eichfelder2016modification} are shown in the second and fourth columns. The red star denotes the $\varepsilon$-optimal solutions. Obviously,  it can be seen from Fig. \ref{figure: finite test instances} that the selection way of the boxes to be divided can effectively reduce the number of iterations. In fact, $\text{the~mod}~\alpha_{i,d=u-l}^{loc} BB$\upcite{eichfelder2016modification} has numerous subdivisions of the box near the optimal solution, while our algorithm has only a few subdivisions, and the following partial graph intuitively reflects this assertion.
\begin{figure}[thp]
\centering
\subfigure[Rastrigin mod]{
\includegraphics[width=5.75cm]{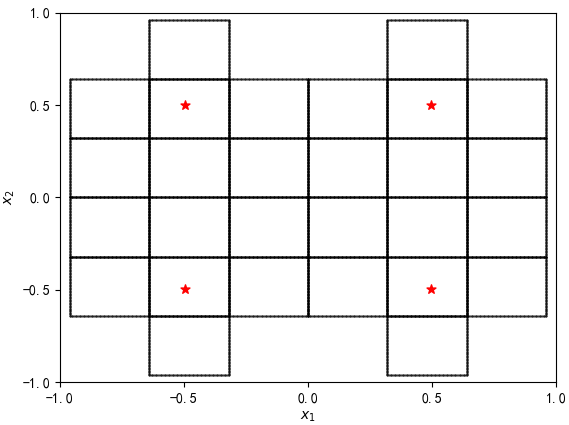}
}\hspace{-5mm}
\quad
\subfigure[Rastrigin mod \cite{eichfelder2016modification}]{
\includegraphics[width=5.75cm]{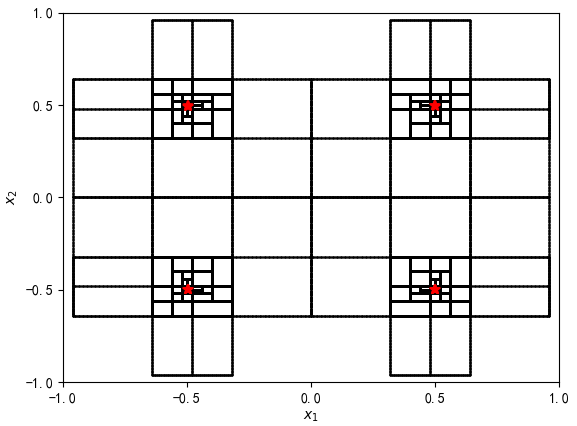}
}
\quad
\subfigure[Deb 1]{
\includegraphics[width=5.75cm]{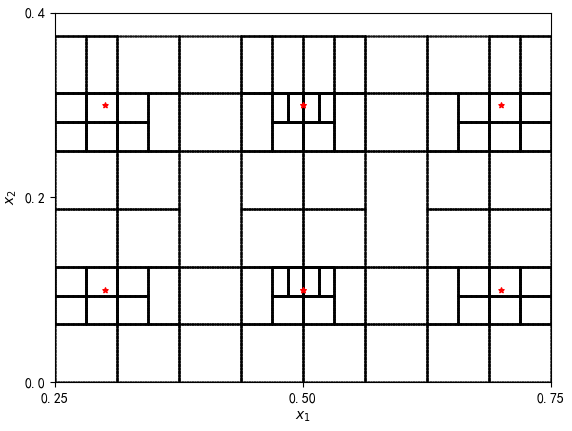}
}\hspace{-5mm}
\quad
\subfigure[Deb 1\cite{eichfelder2016modification}]{
\includegraphics[width=5.75cm]{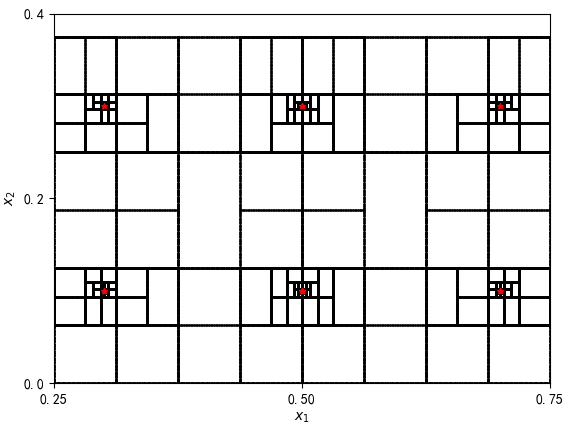}
}
\captionsetup{font={small}}
\caption{Subdivisions on subinterval for Algorithm \ref{alg:Framwork}}
\label{fig:subdivision}
\end{figure}

In what follows, we consider four numerical tests with infinite number of globally optimal solutions listed in \cite{eichfelder2016modification}, as defined by Table \ref{tab:2}.
\begin{table}[thp]
\setlength{\abovecaptionskip}{0.05cm}
\setlength{\belowcaptionskip}{0.05cm}
\captionsetup{font={small}}
\caption{Test instances with infinite number of optimal solutions} 
\centering 
\resizebox{\textwidth}{!}
{
\begin{tabular}{llcl} 
\toprule[0.2mm]
\specialrule{0em}{1pt}{1pt}
 &{$f:\mathbb{R}^{2}\rightarrow\mathbb{R}$} &{ $X$} &{$\arg\min\limits_{x\in X}f(x)$}\\\specialrule{0em}{1pt}{1pt}
\hline
\specialrule{0em}{2pt}{2pt}
{\bf Test01} & $\left(\frac{x_{1}^{2}}{4^2}+\frac{x_{2}^{2}}{2^2}-1\right)^2$ & $\left[\left(
 \begin{matrix}-5\\-5\end{matrix}\right),\left(\begin{matrix}5\\5\end{matrix}\right)\right]$ & $\left\{\left(
 \begin{matrix}x_{1}\\x_{2}\end{matrix}\right)\Big|\frac{x_{1}^{2}}{4^2}+\frac{x_{2}^{2}}{2^2}=1\right\}$\\
\specialrule{0em}{1pt}{1pt}
{\bf Test02}& $\frac{1}{10}(x_{1}(1-x_{2})+x_{2}(1-x_{1}))^2$ & $\left[\left(
 \begin{matrix}-5\\-5\end{matrix}\right),\left(\begin{matrix}5\\5\end{matrix}\right)\right]$ & $\left\{\left(
 \begin{matrix}x_{1}\\x_{2}\end{matrix}\right)\Big|x_{1}\in[-5, 5]\backslash
 \{\frac{1}{2}\}, x_{2}=-\frac{x_{1}}{1-2x_{1}}\right\}$\\
\specialrule{0em}{1pt}{1pt}
{\bf Test03}& $\sin^{2}(\frac{5}{4}x_{1}+x_{2}-3)$ & $\left[\left(
 \begin{matrix}0\\-2\end{matrix}\right),\left(\begin{matrix}4\\3\end{matrix}\right)\right]$ & $\left\{\left(
 \begin{matrix}x_{1}\\x_{2}\end{matrix}\right)\Big|\frac{5}{4}x_{1}+x_{2}=3+a, a\in\{0,\pm\pi\}\right\}\bigcap X$\\
\specialrule{0em}{1pt}{1pt}
{\bf Test04}& $(x_{1}+\sin^{2}(x_{1}))\cos^{2}(x_{2})$ & $\left[\left(
 \begin{matrix}0\\-2\end{matrix}\right),\left(\begin{matrix}4\\3\end{matrix}\right)\right]$ & $\left\{\left(
 \begin{matrix}0\\x_{2}\end{matrix}\right)\Big|x_{2}\in[-2,3]\right\}\bigcup \left\{\left(
 \begin{matrix}x_{1}\\x_{2}\end{matrix}\right)\Big| x_{1}\in[0,4],x_{2}\in\{\pm\frac{\pi}{2}\}\right\}$\\
\specialrule{0em}{2pt}{2pt}
\toprule[0.2mm]
\end{tabular}
}
\label{tab:2}
\end{table}

Table \ref{table:2021051801} shows the numerical results of the instance tests of Table \ref{tab:2}.
\begin{table}[thp]\small
\setlength{\abovecaptionskip}{0.05cm}
\setlength{\belowcaptionskip}{0.05cm}
\captionsetup{font={small}}
\caption{Numerical results for test instance in Table \ref{tab:2}} 
\centering
\setlength{\tabcolsep}{5.4mm}{
\begin{tabular}{lllll}
\toprule[0.15mm]
\multicolumn{5}{c}{PC-NCOP/$\text{the~mod}~\alpha_{i,d=u-l}^{loc} BB$\upcite{eichfelder2016modification}}\\ \toprule[0.1mm]
& \multicolumn{1}{c}{$iter$} &\multicolumn{1}{c}{CPU} &\multicolumn{1}{c}{$N_{\varepsilon}$} &\multicolumn{1}{c}{$flag_{ter}$} \\ \toprule[0.1mm]
{\bf Test01}& {\bf559}/1355 &{\bf6.717}/18.41  &{\bf592}/588  & 0/1 \\\specialrule{0em}{1pt}{1pt}
{\bf Test02}& {\bf672}/1156 &{\bf6.891}/17.511  &{\bf649}/433  & 0/1 \\\specialrule{0em}{1pt}{1pt}
{\bf Test03}& {\bf1189}/3019  &{\bf11.511}/52.353  &1237/{\bf1336}  & 0/1 \\\specialrule{0em}{1pt}{1pt}
{\bf Test04}& {\bf2343}/4863  &{\bf22.724}/121.239  &{\bf3226}/2123  & 0/1 \\\specialrule{0em}{1pt}{1pt}
\toprule[0.15mm]
\end{tabular}}
\label{table:2021051801}
\end{table}
The $iter$ and CPU values of PC-NCOP are significantly better than the value of $\text{the~mod}~\alpha_{i,d=u-l}^{loc} BB$\upcite{eichfelder2016modification}. Except for {\bf Test03}, the number of solutions of PC-NCOP is also higher than that of $\text{the~mod}~\alpha_{i,d=u-l}^{loc} BB$\upcite{eichfelder2016modification}. In addition, the termination condition $\max\limits_{(\widehat{X},\hat{x},\hat{\mu},\hat{\alpha})}w(\widehat{X},\hat{\alpha})\leq\varepsilon$, in PC-NCOP, is satisfied for these test problems  as $flag_{ter}=0$. In other words, this termination condition is meaningful in Algorithm \ref{alg:Framwork} and could be helpful to reduce the number of iterations. Moreover, the results of the interval subdivision and solutions in set $\cX_{ap}^{new}$ for Algorithm \ref{alg:Framwork} are showed in Fig.\ref{fig:subdivision}.
\begin{figure}[thp]
\centering
\setlength{\abovecaptionskip}{0.05cm}
\setlength{\belowcaptionskip}{0.05cm}
\subfigure[{\bf Test01}]{
\includegraphics[width=5.75cm]{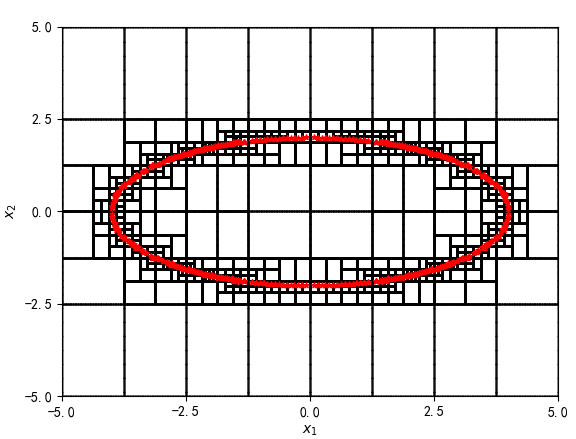}
}\hspace{-5mm}
\quad
\subfigure[{\bf Test02}]{
\includegraphics[width=5.75cm]{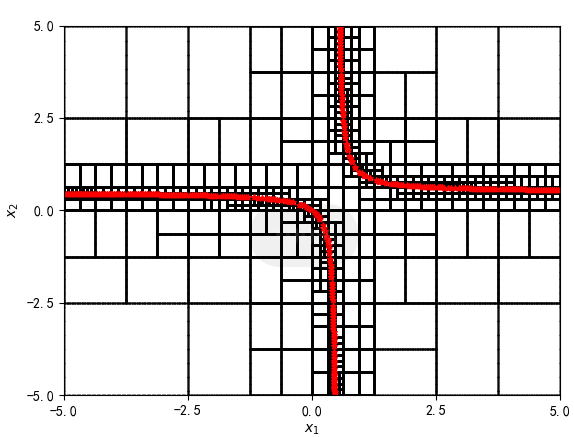}
}
\quad
\subfigure[{\bf Test03}]{
\includegraphics[width=5.75cm]{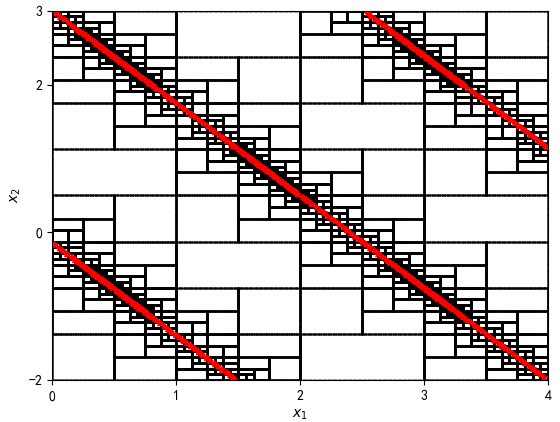}
}\hspace{-5mm}
\quad
\subfigure[{\bf Test04}]{
\includegraphics[width=5.75cm]{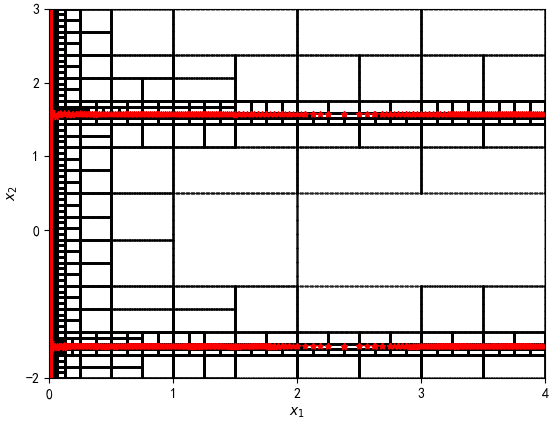}
}
\captionsetup{font={small}}
\caption{Sub-boxes and solutions in set $\cX_{ap}^{new}$ for Algorithm \ref{alg:Framwork} and {\bf Test01-04}}
\label{fig:subdivision}
\end{figure}
This figure  shows that the distribution of these optimal solutions obtained from the PC-NCOP can be used to describe the distribution of the optimal solutions of the original problem.

Finally, in order to verify the efficiency of the proposed algorithm for the high-dimensional instances, Table \ref{tab:test instance with high dimension} is shown a high-dimensional test problem, which is selected from the literature \cite{eichfelder2016modification}.
\begin{table}[thp]
\setlength{\abovecaptionskip}{0.1cm}
\setlength{\belowcaptionskip}{0.1cm}
\captionsetup{font={small}}
\caption{Test instances with high dimensional} 
\centering 
\resizebox{\textwidth}{!}
{
\begin{tabular}{llcc} 
\toprule[0.2mm]
 & $f:\mathbb{R}^{2}\rightarrow\mathbb{R}$ & $X$ & $\arg\min\limits_{x\in X}f(x)$\\
\hline
\specialrule{0em}{2pt}{2pt}
{\bf TestDim$_{d}$},$d\in\mathbb{N}$\upcite{eichfelder2016modification} &$\sum\limits_{i=1}^{d}(\cos(2\pi x_{i}))^2$& $\left[\left(
 \begin{matrix}-\frac{1}{4}\\ \cdots\\-\frac{1}{4}\end{matrix}\right),\left(\begin{matrix}\frac{1}{4}\\ \cdots\\\frac{1}{4}\end{matrix}\right)\right]$ & $\left\{x\in\mathbb{R}^{d}\Big| x_{i}\in\{-\frac{1}{4},\frac{1}{4}\},i\in\{1,\cdots,d\}\right\}$\\
 \specialrule{0em}{2pt}{2pt}
\toprule[0.2mm]
\end{tabular}
}
\label{tab:test instance with high dimension}
\end{table}

Table \ref{table: results for high dimensional} gives results by applying the PC-NCOP and $\text{the~mod}~\alpha_{i,d=u-l}^{loc} BB$\upcite{eichfelder2016modification}  to {\bf TestDim$_{d}$}.
\begin{table}[thp]
\setlength{\abovecaptionskip}{0.1cm}
\setlength{\belowcaptionskip}{0.1cm}
\captionsetup{font={small}}
\caption{Numerical results for test instance in Table \ref{tab:test instance with high dimension}} 
\centering
\setlength{\tabcolsep}{4.4mm}{
\begin{tabular}{lllll}
\toprule[0.2mm]
\multicolumn{5}{c}{PC-NCOP/$\text{the~mod}~\alpha_{i,d=u-l}^{loc} BB$\upcite{eichfelder2016modification}}\\ \hline
     & $iter$ & CPU & $N_{\varepsilon}$  & $flag_{ter}$ \\ \hline
{\bf TestDim}$_{d=2}$      &{\bf11}/47        &{\bf0.061}/0.316          &4/4      &0/1 \\\specialrule{0em}{2pt}{2pt}
{\bf TestDim}$_{d=3}$      &{\bf47}/192        &{\bf0.491}/2.307        &8/8      &0/1 \\\specialrule{0em}{2pt}{2pt}
{\bf TestDim}$_{d=4}$      &{\bf175}/655      &{\bf2.915}/12.994       &16/16    &0/1 \\\specialrule{0em}{2pt}{2pt}
{\bf TestDim}$_{d=5}$      &{\bf607}/2076      &{\bf17.1075}/84.313      &32/32    &0/1 \\\specialrule{0em}{2pt}{2pt}
{\bf TestDim}$_{d=6}$      &{\bf2047}/8007    &{\bf78.059}/267.484   &64/64    &0/1 \\
\specialrule{0em}{2pt}{2pt}
{\bf TestDim}$_{d=7}$      &{\bf6783}/28353    &{\bf420.671}/1223.365   &128/128  &0/1 \\\specialrule{0em}{2pt}{2pt}
{\bf TestDim}$_{d=8}$      &{\bf22272}/89871  &{\bf1103.478}/5391.845  &256/256  &0/1 \\\specialrule{0em}{2pt}{2pt}
{\bf TestDim}$_{d=9}$      &{\bf72704}/282072  &{\bf4940.783}/32916.650   &512/512  &0/1 \\
\toprule[0.2mm]
\end{tabular}}
\label{table: results for high dimensional}
\end{table}
From Table \ref{table: results for high dimensional}, the experimental results demonstrate that both the proposed algorithm and $\text{the~mod}~\alpha_{i,d=u-l}^{loc} BB$\upcite{eichfelder2016modification} lead to the same values of $N_{\varepsilon}$. However, compared with Algorithm \ref{alg:Framwork}, $\text{the~mod}~\alpha_{i,d=u-l}^{loc} BB$\upcite{eichfelder2016modification} requires more iterations and CPU, and the advantage of Algorithm \ref{alg:Framwork} is more prominent as the dimension increases. In these high-dimensional problems, $flag_{ter}=0$ indicates that the stop condition $\max\limits_{(\widehat{X},\hat{x},\hat{\mu},\hat{\alpha})}w(\widehat{X},\hat{\alpha})\leq\varepsilon$ satisfies and $L_{NC}\neq \emptyset$ holds. These help to reduce the number of iterations and the CPU. Furthermore, we found an interesting phenomenon, that is, the number of iterations of Algorithm \ref{alg:Framwork} is one less than the number of globally optimal solutions.

From the above numerical experiments, for these instances with infinite number of optimal solutions or with high dimensional, the termination condition $\max\limits_{(\widehat{X},\hat{x},\hat{\mu},\hat{\alpha})}w(\widehat{X},\hat{\alpha})\leq\varepsilon$ is easier to satisfied than $L_{NC}=\emptyset$. This means that two termination conditions in the proposed algorithm can be used to reduce the number of iterations of the algorithm.


\section{Conclusions}\label{secremark}
An {\rm$\alpha$BB} convexification method based on box classification strategy is studied for non-convex single-objective optimization problems. The box classification strategy is proposed based on the convexity of the objective function on the sub-boxes when dividing the boxes, which helps to reduce the number of box divisions and improve the computational efficiency. The {\rm$\alpha$BB} method is introduced to construct the piecewise convexification problem of the non-convex optimization problem, and the solution set of the piecewise convexification problem is used to approximate the globally optimal solution set of the original problem. Based on the theoretical results, an {\rm$\alpha$BB} convexification algorithm with two termination conditions is proposed, and numerical experiments show that this algorithm can obtain a large number of globally optimal solutions more quickly than other algorithms.

\hbox to14cm{\hrulefill}\par
\noindent\textsc{Qiao Zhu}\\
{{\footnotesize
{College of Mathematics, Sichuan University, 610065, Chengdu Sichuan, China}\\
 {E-mail address: math\_qiaozhu@163.com}\\
{\scshape Liping Tang}\\
{\footnotesize
{National Center for Applied Mathematics, Chongqing Normal University, 401331 Chongqing, China.}\\
 { Email address: tanglipings@163.com}}
\medskip}\\
{\scshape Xinmin Yang}\\
{\footnotesize
{National Center for Applied Mathematics, Chongqing Normal University, 401331 Chongqing, China. }\\
 { Email: xmyang@cqnu.edu.cn}}\\


\begin{thebibliography}{99}
\bibitem{adjiman1998globalII} C.S. Adjiman, I.P. Androulakis, C.A. Floudas. A global optimization method, $\alpha${BB}, for general twice-differentiable constrained {NLPs}-{II}. {I}mplementation and computational results. \emph{
Computers $\&$ Chemical Engineering}, 1998, \textbf{22}: 1159--1179.


\bibitem{adjiman1998globalI} C.S. Adjiman, S. Dallwig, C.A. Floudas, A. Neumaier. A global optimization method, $\alpha${BB}, for general twice-differentiable constrained {NLPs}-{I}. {T}heoretical advances. \emph{Computers $\&$ Chemical Engineering}, 1998, \textbf{22}: 1137--1158.

\bibitem{androulakis1995alphabb} I.P. Androulakis, C.D. Maranas, C.A. Floudas. {\rm$\alpha$BB}: A global optimization method for general constrained nonconvex problems. \emph{Journal of Global Optimization}, 1995, \textbf{7}: 337--363.

\bibitem{eichfelder2016modification} G. Eichfelder, T. Gerlach, S. Sumi. A modification of the $\alpha${BB} method for box-constrained optimization and an application to inverse kinematics. \emph{EURO Journal on Computational Optimization}, 2016,  \textbf{4}: 93--121.

\bibitem{epitropakis2011finding}  M.G. Epitropakis,  V.P. Plagianakos, M.N. Vrahatis. Finding multiple global optima exploiting differential evolution's niching capability. \emph{In: Proceedings of IEEE SDE}, Paris, France, 2011: pp80--87.

\bibitem{hladik2016extension} M. Hlad{\'\i}k. An extension of the {\rm$\alpha$BB}-type underestimation to linear parametric Hessian matrices. \emph{Journal of Global Optimization},  2016, \textbf{64}: 217--231.

\bibitem{jain2017non} P. Jain, , P. Kar , et al..  Non-convex Optimization for Machine Learning. \emph{Foundations and Trends in Machine Learning}, 2017, \textbf{10}: 142--363.

\bibitem{kazazakis2018arbitrarily}  N. Kazazakis,  C.S. Adjiman. Arbitrarily tight $\alpha$BB underestimators of general non-linear functions over sub-optimal domains. \emph{Journal of Global Optimization}, 2018, \textbf{71}: 815--844.


\bibitem{liuzzi2019new} G. Liuzzi, M. Locatelli, V. Piccialli. A new branch-and-bound algorithm for standard
quadratic programming problems. \emph{Optimization Methods $\&$ Software}, 2019, \textbf{34}: 79--97.

\bibitem{locatelli2021global} M. Locatelli, F. Schoen.  Global optimization: Historical notes and recent
  developments. \emph{EURO Journal on Computational Optimization}, 2021, \textbf{9}: 100012.

\bibitem{marmin2019globally} A. Marmin,  M. Castella,J.C.  Pesquet.  How to globally solve non-convex
  optimization problems involving an approximate $l_{0}$ penalization. \emph{ In: ICASSP 2019-2019 IEEE International Conference on Acoustics}, Speech and Signal Processing (ICASSP),2019, pp5601--5605.

\bibitem{Milan2014On}  H. Milan.  On the efficient gerschgorin inclusion usage in the global
  optimization $\alpha${BB} method. \emph{Journal of Global Optimization}, 2014, \textbf{61}: 235--253.

\bibitem{nerantzis2019tighter} D. Nerantzis, C.S. Adjiman. Tighter $\alpha${BB} relaxations through a
  refinement scheme for the scaled gerschgorin theorem. \emph{Journal of Global Optimization}, 2019, \textbf{73}: 467--483.

\bibitem{schoen2021efficient} F. Schoen, L. Tigli. Efficient large scale global optimization through
  clustering-based population methods. \emph{Computers and Operations Research}, 2021, \textbf{127}: 105--165.

\bibitem{skjal2014new} A. Skj{\"a}l,  T. Westerlund. New methods for calculating $\alpha${BB}-type
  underestimators. \emph{Journal of {G}lobal {O}ptimization}, 2014, \textbf{58}: 411--427.

\bibitem{sun2001convexification} X.L. Sun, K.I.M. McKinnon, D. Li. A convexification method for a class of
  global optimization problems with applications to reliability optimization. \emph{Journal of Global Optimization}, 2001, \textbf{21}: 185--199.

\bibitem{wen2018survey} F. Wen, L. Chu, P. Liu, R.C. Qiu. A survey on nonconvex
  regularization-based sparse and low-rank recovery in signal processing, statistics, and machine learning. \emph{IEEE Access}, 2018, \textbf{6}: 69883--69906.

\bibitem{wu2005convexification} Z.Y. Wu, F.S. Bai, L.S. Zhang. Convexification and concavification for a
  general class of global optimization problems. \emph{Journal of Global Optimization}, 2005, \textbf{31}: 45--60.

\bibitem{xia2020survey}  Y. Xia.  A survey of hidden convex optimization. \emph{Journal of the Operations Research Society of China}. 2020, \textbf{8}: 1--28.

\bibitem{yang2018successive} Y. Yang, M. Pesavento, S. Chatzinotas, B. Ottersten. Successive convex
  approximation algorithms for sparse signal estimation with nonconvex
  regularizations. \emph{IEEE Journal of Selected Topics in Signal Processing}, 2018, \textbf{12}: 1286--1302.

\bibitem{zhong2012training} P. Zhong. Training robust support vector regression with smooth non-convex
  loss function. \emph{Optimization Methods $\&$ Software}, 2012, \textbf{27}: 1039--1058.
\end{thebibliography}
\end{document}